\documentclass[11pt]{article}

\usepackage{graphicx}
\usepackage{color}

\usepackage{amssymb, amsmath, amsthm}
\usepackage{float}

\setlength{\headheight}{1in}
\setlength{\parindent}{.25in}
\setlength{\textwidth}{6in}
\setlength{\oddsidemargin}{.25in}
\setlength{\evensidemargin}{.25in}
\setlength{\textheight}{9in}
\setlength{\topmargin}{-1in}
\setlength{\parskip}{\smallskipamount}

\newtheorem{dfn}{Definition}[section]
\newtheorem{defn}[dfn]{Definition}

\newtheorem{rem}[dfn]{Remark}

\newtheorem{thm}[dfn]{Theorem}
\newtheorem{lem}[dfn]{Lemma}
\newtheorem{lemma}[dfn]{Lemma}

\newtheorem{prop}[dfn]{Proposition}

\newtheorem{claim}[dfn]{Claim}
\newtheorem{cor}[dfn]{Corollary}

\overfullrule=0pt

\begin{document}

\title{Strong Haken via Sphere Complexes}

\author{Sebastian Hensel and Jennifer Schultens}

\maketitle

\begin{abstract} 
	We give a short proof of Scharlemann's Strong Haken Theorem for closed $3$-manifolds (and manifolds with spherical boundary). As an application, we also show that
	given a decomposing sphere $R$ for a $3$-manifold $M$ that splits off an $S^2 \times S^1$ summand, any Heegaard splitting of $M$ restricts to the standard Heegaard splitting on the summand.
\end{abstract}
 \section{Introduction}
  Any (closed, oriented, connected) three-dimensional manifold $M$ admits a \emph{Heegaard splitting}, that is, it 
  can be decomposed into two three-dimensional handlebodies $V, V'$ of the same genus $g$ along 
  an embedded surface $S\subset M$:
	\[ M = V \cup_S V'. \]
  In theory, all information about the three-manifold is encoded in the identification of the two handlebodies. However, in practice, interpreting topological properties of $M$ using a Heegaard splitting is often nontrivial. 
  
  A basic example of this occurs when studying spheres in $M$. If $\alpha \subset S$ is a curve
  which bounds disks $D, D'$ in both $V$ and $V'$, then gluing these disks yields a $2$--sphere $D\cup D' \subset M$
  which intersects $S$ in the single curve $\alpha$. When essential, such a sphere is called a \emph{Haken sphere} -- but a priori it is completely unclear what kind of spheres in $M$ are of this form.

	A classical
  theorem of Haken shows that if $M$ admits any essential sphere $\sigma$, then it also admits a Haken sphere $\sigma'$. In fact, Scharlemann recently proved a \emph{Strong Haken Theorem}, showing that
  $\sigma'$ can in fact be chosen to be isotopic to $\sigma$:
 	\begin{thm} (Strong Haken Theorem) \label{GStrongHaken}
 		Let $M = V \cup_S V'$ be a Heegaard splitting.  Every essential $2$-sphere in $M$ is isotopic to a Haken sphere for $M = V \cup_S V'$.
	 \end{thm}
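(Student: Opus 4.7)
My plan is to place $\sigma$ in transverse position with respect to the Heegaard surface $S$, so that $\sigma \cap S$ is a collection of disjoint simple closed curves $\gamma_1,\ldots,\gamma_n$ on $S$, and the two pieces $P = \sigma \cap V$, $P' = \sigma \cap V'$ are planar surfaces properly embedded in the handlebodies. Among all representatives of the isotopy class $[\sigma]$ that are transverse to $S$, I would fix one minimizing $n$ (perhaps with a secondary complexity such as the number of components of $P\cup P'$). The goal is to prove $n = 1$ and that the unique intersection curve bounds a disk in each of $V$ and $V'$.

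The main tool is the sphere complex of each handlebody double $DV = V \cup_S \bar V \cong \#_g S^2 \times S^1$, and similarly $DV' \cong \#_g S^2 \times S^1$. Doubling $P$ across its boundary in $S$ yields a sphere system $\widehat{P} \subset DV$, which one can analyze using Hatcher's normal form theory and the contractibility/connectivity of the sphere complex. Under minimality of $n$, no component of $P$ or $P'$ can be a disk with trivial boundary in $S$, nor a boundary-parallel annulus, since any such component would permit a local isotopy reducing $n$. An Euler characteristic argument on $\sigma$ then forces at least one side, say $P$, to contain a disk component $D$, whose boundary is therefore a meridian of $V$ among the curves $\gamma_i$.

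The heart of the argument is to propagate this disk-bounding property across $S$: one must show that, possibly after an ambient isotopy of $\sigma$ that does not increase $n$, the same curve $\gamma_i$ also bounds a disk in $V'$. This amounts to moving $\widehat{P'}$ within $DV'$ to a position in which the chosen innermost curve on $\sigma$ bounds a disk on both sides. I would try to exploit the strong connectivity or contractibility of the sphere complex of $\#_g S^2 \times S^1$ to find a path in the sphere complex connecting $\widehat{P'}$ to a position compatible with the disk $D$, and realize this path by an ambient isotopy of $\sigma$ in $M$. Once such compatible disks $D \subset V$ and $D' \subset V'$ bounding the same curve $\gamma_i \subset \sigma\cap S$ are produced, an innermost-on-$\sigma$ swap replaces $\sigma$ (up to isotopy) by the Haken sphere $D\cup D'$.

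The main obstacle I expect is precisely this last propagation step. Surgery along a compressing disk produces a new sphere that is in general \emph{not} isotopic to $\sigma$, so the entire reduction must be carried out by ambient isotopy of $\sigma$ rather than by cut-and-paste. The sphere-complex framework is built to track isotopy classes carefully, and the delicate point will be to certify that the path of isotopies in $DV'$ descends to an ambient isotopy in $M$ without forcing extra intersections with $S$ on the $V$-side, so that minimality of $n$ is preserved throughout.
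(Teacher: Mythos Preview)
Your proposal has a genuine gap at precisely the point you flag, and a second one you do not. The paper's proof takes a completely different route that sidesteps both.

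Your propagation step has no mechanism behind it. A path in the sphere complex of $DV'$ records a sequence of disjoint sphere systems in $DV'$, not an isotopy of $\sigma$ in $M$; an isotopy of $\widehat{P'}$ inside the double moves the two copies of $V'$ independently and need not descend to $M$ at all, let alone without creating new intersections on the $V$--side. More seriously, your final ``innermost swap'' is not an isotopy. If $\gamma_i$ is innermost on $\sigma$, bounding a disk $D\subset V$ on $\sigma$, and $\gamma_i$ also bounds some $D'\subset V'$, then $D\cup D'$ is indeed a sphere, but there is no reason it is isotopic to $\sigma$: the remainder $\sigma\setminus D$ is a disk with boundary $\gamma_i$ that still meets $S$ in $n-1$ curves, and replacing it by $D'$ is surgery, not isotopy. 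This is exactly the gap between Haken's Lemma and the Strong Haken Theorem, and your outline does not close it.

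The paper avoids intersection curves with $S$ entirely. It orders Heegaard splittings of punctured manifolds by $(g(S),n(S))$ lexicographically and inducts. The base case $g=0$ is the punctured $S^3$, handled directly. For the inductive step, the key claim is: if $R$ is a surviving Haken sphere and $R'$ is any surviving sphere \emph{disjoint from} $R$, then $R'$ is also Haken---proved by cutting $M$ along $R$, so that the piece containing $R'$ inherits a splitting of strictly smaller complexity and induction applies. Since the surviving sphere complex of $M$ is connected and the ordinary Haken Lemma furnishes one surviving Haken sphere to start, the Haken property propagates along edges to every surviving sphere (almost peripheral spheres being handled separately). The sphere complex used is that of $M$ itself, not of a doubled handlebody, and its role is to carry the Haken property across adjacencies rather than to manufacture an isotopy of a fixed sphere.
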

 The purpose of this article is to give an independent, short proof of Theorem~\ref{GStrongHaken} for any $M$ which is closed or has spherical boundary. 
 We want to mention that
 Scharlemann's version of the strong Haken theorem is in fact more general, allowing for manifolds with arbitrary boundary, and also showing that any properly embedded disk is isotopic to a Haken disk. This more general case 
 could also be obtained from our methods; for clarity we focus on the closed case throughout the article.
 
  To prove Theorem~\ref{GStrongHaken}, we make crucial use of the \emph{(surviving) sphere complex}, which is a combinatorial complex
  encoding the intersection pattern of essential (surviving) spheres in $M$. Such complexes have already been used successfully 
  in the study of outer automorphism groups of free groups (via mapping class groups of connected sums of
  $S^2\times S^1$). Here, we show that this perspective can also be useful in streamlining arguments in low-dimensional topology.
  The other crucial ingredient is the classical Waldhausen theorem on Heegaard splittings of the three-sphere.
  Together, these allow an inductive approach to Theorem~\ref{GStrongHaken}.
 	
  \medskip Our methods and results also allow to control Heegaard splittings of reducible manifolds.
	As a motivating example, we prove:
	\begin{prop} \label{HSofWn}
		Every Heegaard splitting of $W_n = n(S^2 \times S^1)$ is isotopic to a stabilization of the standard Heegaard splitting.  
	\end{prop}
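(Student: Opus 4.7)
The plan is to induct on $n$. For $n = 0$ we have $W_0 = S^3$, and Waldhausen's theorem on Heegaard splittings of $S^3$ gives the base case: every Heegaard splitting is a stabilization of the standard genus-zero splitting.

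For the inductive step, take a Heegaard splitting $M = V \cup_S V'$ of $W_n$ of genus $g$. Since $n \geq 1$, $W_n$ contains a non-separating essential $2$-sphere $\sigma$, for instance an $S^2 \times \{\mathrm{pt}\}$ in one of the summands. Applying Theorem~\ref{GStrongHaken}, I can isotope $\sigma$ to a Haken sphere, so $\sigma \cap S$ is a single essential simple closed curve $\alpha$ bounding disks $D \subset V$ and $D' \subset V'$; since $\sigma$ is non-separating, so are $\alpha$, $D$ and $D'$. I then cut $M$ along $\sigma$ and cap off the two resulting sphere boundary components with $3$-balls to obtain $W_{n-1}$. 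The surface $S$, cut along $\alpha$ and extended across the caps by disks, becomes a closed surface $S^*$ of genus $g - 1$. Because $D, D'$ are non-separating, attaching the capping balls to the cut handlebodies $V \setminus \nu(D)$, $V' \setminus \nu(D')$ along boundary disks preserves the handlebody structure, so $S^*$ yields a Heegaard splitting of $W_{n-1}$. By the inductive hypothesis, this splitting is isotopic to a $k$-fold stabilization of the standard splitting of $W_{n-1}$ for some $k \geq 0$.

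To conclude, I would invert the cut-and-cap: $W_n$ is recovered from $W_{n-1}$ by removing two $3$-balls and gluing the resulting sphere boundaries, and on the level of Heegaard surfaces this is a Haken sum with the standard genus-one Heegaard splitting of $S^2 \times S^1$. The application announced in the abstract (itself a consequence of Theorem~\ref{GStrongHaken}) ensures that the summand splitting really is the standard one. Since stabilization is supported in a small ball that can be placed away from the Haken-sum region, it commutes with Haken sum, so the Haken sum of the stabilized standard $W_{n-1}$ splitting with the standard $S^2 \times S^1$ splitting is isotopic to a $k$-fold stabilization of the standard $W_n$ splitting. Hence $(M, S)$ is isotopic to a stabilization of the standard splitting of $W_n$.

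The main obstacle I anticipate is making the inversion step rigorous: one must verify that the $S^2 \times S^1$ side of the splitting induced by $\sigma$ is really standard (not some other genus-one splitting), and that Haken sum genuinely commutes with stabilization up to ambient isotopy. Any subtleties about non-uniqueness in the choice of the Haken sphere $\sigma$ or the gluing used to reconstruct $W_n$ should be absorbable by an isotopy of $S^*$ on the $W_{n-1}$ side before invoking the inductive hypothesis.
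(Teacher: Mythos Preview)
Your approach is valid but differs substantially from the paper's. The paper's proof in Section~4 deliberately avoids Theorem~\ref{GStrongHaken}: it first handles the unstabilized case via Lemma~\ref{genusn} (which rests on Corollary~\ref{cor:strong-haken-standard}, the Strong Haken theorem for the \emph{standard} splitting only, derived from the Griffiths and Laudenbach theorems), and then for the stabilized case finds a \emph{separating} Haken sphere splitting off a $W_1$ summand and invokes the Qiu--Scharlemann solution of the Gordon conjecture to push the stabilization to one side. Your route, by contrast, uses the full Theorem~\ref{GStrongHaken} (which is proved independently in Section~5, so there is no circularity) together with a \emph{non-separating} Haken sphere, and thereby sidesteps Griffiths, Laudenbach, and Qiu--Scharlemann entirely. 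The paper itself notes at the start of Section~\ref{HSWn} that the result follows from Theorem~\ref{GStrongHaken}; your argument is essentially a way of spelling that out.

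There is, however, a genuine confusion in your reconstruction step. You cut along a non-separating $\sigma$, but then invoke Corollary~\ref{ReducibleSummand}, which concerns a \emph{separating} decomposing sphere; it does not apply to your setup as written. What you actually need is the purely local fact that ``drill two balls centred on the Heegaard surface of $(W_{n-1},S^*)$ and glue the boundary spheres matching sides'' is well-defined up to isotopy (independent of where you drill and how you glue) and, applied to the standard splitting of $W_{n-1}$, yields the standard splitting of $W_n$. This is true and elementary (any two pairs of points on $S^*$ are related by an ambient isotopy preserving $S^*$; the gluing matching $V$-spots to $V$-spots is unique up to isotopy), and once established, Corollary~\ref{ReducibleSummand} is unnecessary. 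Alternatively, if you want to use Corollary~\ref{ReducibleSummand}, you should start with a \emph{separating} Haken sphere $R$ cutting off an $S^2\times S^1$ summand; then the corollary gives the standard splitting on that side, induction handles the $W_{n-1}$ side, and the connected-sum reassembly is immediate.
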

  	
  	Combined the uniqueness for $W_1$ with the Strong Haken Theorem, we obtain the following structural result on Heegaard
  	splittings of arbitrary reducible three-manifolds.
  
  \begin{cor} \label{ReducibleSummand}
  	Given a reducible $3$-manifold $M$ with a Heegaard splitting $M = V \cup_S V'$, any decomposing sphere that splits off a $S^2 \times S^1$ summand can be isotoped so that $S$ is standard in this summand.  
  \end{cor}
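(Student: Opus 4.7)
The plan is to combine the Strong Haken Theorem (Theorem~\ref{GStrongHaken}) with the rigidity of Heegaard splittings of $W_1 = S^2 \times S^1$ (Proposition~\ref{HSofWn}), transferring the latter back into $M$ via an isotopy that fixes the capping ball.

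First, I would invoke Theorem~\ref{GStrongHaken} to isotope $R$ to a Haken sphere for the splitting $M = V \cup_S V'$: after isotopy, $\alpha := R \cap S$ is a single curve and $R = D \cup_\alpha D'$, with $D \subset V$ and $D' \subset V'$ disks. Since $R$ is a decomposing sphere for $M = N \# (S^2 \times S^1)$ it separates $M$, and a short connectivity argument (tracing through $M = V \cup_S V'$ and using that any subsurface of $S$ lying entirely in one side of $R$ would force that side to be empty) shows this forces $\alpha$ to separate $S$ and consequently $D$, $D'$ to separate $V$, $V'$ respectively. Let $M_W$ denote the closure of the $(S^2 \times S^1)$-side of $M \setminus R$, and let $V_W \subset V$, $V'_W \subset V'$ be the components of $V \setminus D$, $V' \setminus D'$ lying in $M_W$; each is a handlebody.

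Second, I would cap $M_W$ off with a ball $B$ glued along $R$ to recover $W_1 = S^2 \times S^1$, and extend $S \cap M_W$ across $B$ by a disk $\hat{D} \subset B$ with $\partial \hat{D} = \alpha$, producing a closed surface $\hat{S}_W = (S \cap M_W) \cup \hat{D}$ in $W_1$. The two sides of $\hat{S}_W$ are $V_W \cup B_+$ and $V'_W \cup B_-$, where $B_\pm$ are the half-balls into which $\hat{D}$ divides $B$; each is a handlebody (attaching a ball to a handlebody along a disk yields a handlebody), so $\hat{S}_W$ is a genuine Heegaard splitting of $W_1$. By Proposition~\ref{HSofWn} with $n = 1$, $\hat{S}_W$ is ambient isotopic in $W_1$ to a stabilization of the standard genus-$1$ splitting.

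The main obstacle is to transfer this ambient isotopy of $W_1$ back into $M$ while keeping $R$ fixed, equivalently to modify the isotopy so as to fix the capping ball $B$ pointwise. I would invoke the standard fact that any two smoothly embedded $3$-balls in a connected $3$-manifold are ambient isotopic, together with the technique of conjugating an ambient isotopy by squeezing diffeomorphisms to make it the identity on a prescribed small ball. Once arranged, the restricted isotopy on $M_W \subset W_1$ fixes $\partial M_W = R$ and extends by the identity across the other summand to an ambient isotopy of $M$; after this isotopy, the restriction of $S$ to the $(S^2 \times S^1)$-summand is in standard (possibly stabilized) form, as required.
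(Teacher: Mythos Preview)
Your proposal is correct and follows exactly the route the paper indicates (the paper does not give a separate proof, only the sentence ``Combined the uniqueness for $W_1$ with the Strong Haken Theorem, we obtain the following\ldots''): isotope $R$ to a Haken sphere via Theorem~\ref{GStrongHaken}, cap off the $S^2\times S^1$ side, and invoke Proposition~\ref{HSofWn}. Your extra step of straightening the isotopy to fix the capping ball (via isotopy extension/Palais) is the right way to push the conclusion back into $M$, and your separation argument for $\alpha$---though phrased a bit loosely---is the standard one: if $\alpha$ were nonseparating in $S$ then $V\setminus D$ and $V'\setminus D'$ would both be connected, forcing $M\setminus R$ to be connected, contradicting that $R$ is a decomposing sphere.
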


\textbf{Acknowledgements} We would like to thank Martin Scharlemann for finding a mistake in an earlier
draft, and many helpful comments.

\section{Heegaard Splittings of Closed $3$--Manifolds}
In this section we recall some preliminaries on closed three-manifolds, their Heegaard splittings, and
spheres in such manifolds. The results presented here are classical.

\subsection{Heegaard Splittings}
\begin{defn} (Heegaard splitting) \label{HS}
	A {\em handlebody} is a $3$-manifold that is homeomorphic to a regular neighborhood of a graph in $S^3$.  A {\em Heegaard splitting} of a $3$-manifold $M$ is a decomposition $M = V \cup_S V'$, where $V, V'$ are handlebodies and $S = \partial V = \partial V' = V \cap V'$.
	
	The surface $S$ is called the {\em splitting surface} or {\em Heegaard surface}.  Heegaard splittings are considered {\em equivalent} if their splitting surfaces are isotopic.
\end{defn}

\begin{rem}
	The Heegaard splitting $M = V \cup_S V'$ is completely specified by the pair $(M, S)$, so we will sometimes write $(M, S)$ instead of $M = V \cup_S V'$.
\end{rem}

\begin{rem}
	The connected sum of two $3$-manifolds $M_1, M_2$ with Heegaard splittings $(M_1, S_1),$ $(M_2, S_2)$ inherits a Heegaard splitting $(M_1 \# M_2,$ $S_1 \# S_2)$.  This Heegaard splitting is unique in the sense that it is completely determined by the construction.  
	Later, we will briefly consider a refined notion of equivalence for Heegaard splittings where we distinguish between the two sides of the splitting surface.  With respect to this refined notion of equivalence, the Heegaard splitting of a connected sum is then not (a priori) unique, as the construction allows two different choices, namely which side of $S_1$ is identified to which side of $S_2$.
\end{rem}

\begin{defn}
	Given a Heegaard splitting $(M, S)$, the Heegaard splitting obtained from the pairwise connected sum $(M, S) \# (S^3, T)$, where $T$ is the standard unknotted torus in $S^3$, is called a {\em stabilization} of $(M, S)$.  A Heegaard splitting is {\em stabilized} if it is the stabilization of another Heegaard splitting and {\em unstabilized} otherwise. 
	
	A sphere that separates $(M, S) \# (S^3, T)$, {\it i.e.} a sphere that splits off a punctured $3$-ball containing an unknotted punctured torus, is called a {\em stabilizing} sphere.  
	
	A {\em stabilizing pair} of disks is a pair $(D, D')$ of disks such that $D$ is properly embedded in $V$, $D'$ is properly embedded in $V'$ and $\partial D \cap \partial D'$ is exactly one point. 
\end{defn}

\begin{rem}  
	A Heegaard splitting is stabilized if and only if it admits a stabilizing pair of disks.  Indeed, consider the standard unknotted torus in the $3$-sphere and observe that it separates $S^3$ into two solid tori.  The boundaries of the meridian disks of these solid tori intersect in exactly one point.
\end{rem}

A crucial theorem of Waldhausen's characterises all Heegaard splittings of the three-sphere.  See \cite{Waldhausen1968}.
\begin{thm}[Waldhausen's Theorem] \label{Waldhausen}  Every Heegaard splitting of the three-sphere is a stabilization of the unique standard genus $0$ Heegaard splitting.  \end{thm}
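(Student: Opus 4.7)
The plan is induction on the genus $g$ of the Heegaard splitting $(S^3, S) = V \cup_S V'$. The base case $g = 0$ asserts uniqueness of the genus-zero splitting and reduces to the Schoenflies theorem: any smoothly embedded $2$-sphere in $S^3$ bounds a ball on each side, so all genus-zero splittings are isotopic to the standard one. For the inductive step, it suffices to prove that every splitting of genus $g \ge 1$ is stabilized; then passing to the unstabilization decreases the genus by one, and the inductive hypothesis applies.

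To show a genus-$g \ge 1$ splitting of $S^3$ is stabilized, I would produce a stabilizing pair of disks, i.e.\ properly embedded disks $D \subset V$ and $D' \subset V'$ with $|\partial D \cap \partial D'| = 1$. Choose complete cut systems $\{D_1, \dots, D_g\}$ and $\{D'_1, \dots, D'_g\}$ for $V$ and $V'$ respectively, put their boundaries in general position on $S$, and minimize the total intersection number. The essential geometric input is that $S^3$ is simply connected, equivalently that the spines of $V$ and $V'$ together form an unknotted graph in $S^3$ whose handlebody regular neighborhoods are $V$ and $V'$. This severely constrains the possible intersection patterns of meridian disks.

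The combinatorial heart of the argument, and the main obstacle, is deducing from this constraint that some pair $D_i, D_j'$ must intersect in exactly one point. One approach is by contradiction: if no such pair exists, then every arc of $\partial \mathcal{D} \cap \partial \mathcal{D}'$ on $S$ can be pushed away using innermost-disk and outermost-arc surgeries, exploiting the fact that any simple closed curve on $S$ that bounds disks on both sides bounds a $2$-sphere in $S^3$, which by Schoenflies bounds a ball. Iterating such simplifications contradicts minimality. An alternative modern route is to set $S$ in thin position relative to a sweepout by the standard genus-zero splitting and apply a Rubinstein--Scharlemann graphic or Scharlemann--Thompson thin-position argument to locate the stabilizing pair directly.

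Once a stabilizing pair $(D, D')$ is in hand, a regular neighborhood of $D \cup D'$ meets $V \cup_S V'$ in precisely the model of an unknotted torus in $S^3$, so removing this summand yields a Heegaard splitting of $S^3$ of genus $g - 1$. By induction this smaller splitting is a stabilization of the standard splitting, and adding the torus summand back produces the desired stabilization structure on the original splitting.
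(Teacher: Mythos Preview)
The paper does not prove this theorem at all: Waldhausen's Theorem is stated as a classical input and simply cited to \cite{Waldhausen1968}, with no argument given. So there is no ``paper's own proof'' to compare against; the authors treat it as a black box on which the rest of the article (in particular Theorem~\ref{markedWaldhausen} and the induction in the proof of Theorem~\ref{GStrongHaken}) relies.

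As for your sketch itself: the overall architecture (induction on genus, Schoenflies for $g=0$, and showing any $g\geq 1$ splitting of $S^3$ is stabilized) is indeed the shape of Waldhausen's original argument. However, the step you flag as ``the combinatorial heart'' is not actually carried out. Your contradiction paragraph conflates two different phenomena: innermost/outermost surgeries on intersections between meridian systems, and curves on $S$ bounding disks on both sides. Minimizing $|\partial\mathcal{D}\cap\partial\mathcal{D}'|$ and doing outermost-arc surgeries does not by itself force some $|\partial D_i\cap\partial D'_j|=1$; one needs a more delicate argument (in Waldhausen's paper, or in later accounts, this is where the real work lies, e.g.\ producing a reducing sphere rather than a stabilizing pair directly). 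The alternative you mention (Rubinstein--Scharlemann graphics, thin position) is a legitimate modern route, but invoking it by name is not a proof. So your proposal is a reasonable outline of how the theorem \emph{is} proved, but as written it has a genuine gap at exactly the hard step, and in any case the paper itself supplies nothing to compare it to.
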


\subsection{Sphere Complexes}
A core tool in our argument is the following simplicial complex, which encodes the intersection patterns
of spheres in $M$.
\begin{defn}[Sphere complex]
	A sphere $S$ in a $3$-manifold is {\em compressible} if it bounds a $3$-ball.  Otherwise, it is {\em incompressible}. 
	We say that a sphere is \emph{peripheral} if it is isotopic into the boundary of the manifold.
	
	The {\em sphere complex} of a $3$-manifold $M$ is the simplicial complex ${\cal S}(M)$ determined by the following three conditions:
	
	\begin{enumerate}
		\item Vertices of ${\cal S}(M)$ correspond to isotopy classes of incompressible, non-peripheral embedded $2$-spheres;
		\item Edges of ${\cal S}(M)$ correspond to pairs of vertices with disjoint representatives;
		\item The complex ${\cal S}(M)$ is flag. 
	\end{enumerate}
\end{defn}
It is not hard to see that a simplex in the sphere complex corresponds to a collection of nonisotopic spheres
that can be realised disjointly. 

Furthermore, a standard argument involving surgery at innermost intersection circles shows that the
sphere complex of any closed $3$--manifold is connected (if it is nonempty). See
e.g. \cite{HatcherStability} for a proof in the case of doubled handlebodies, which also works in general.
\begin{figure}
	\includegraphics[scale=.5]{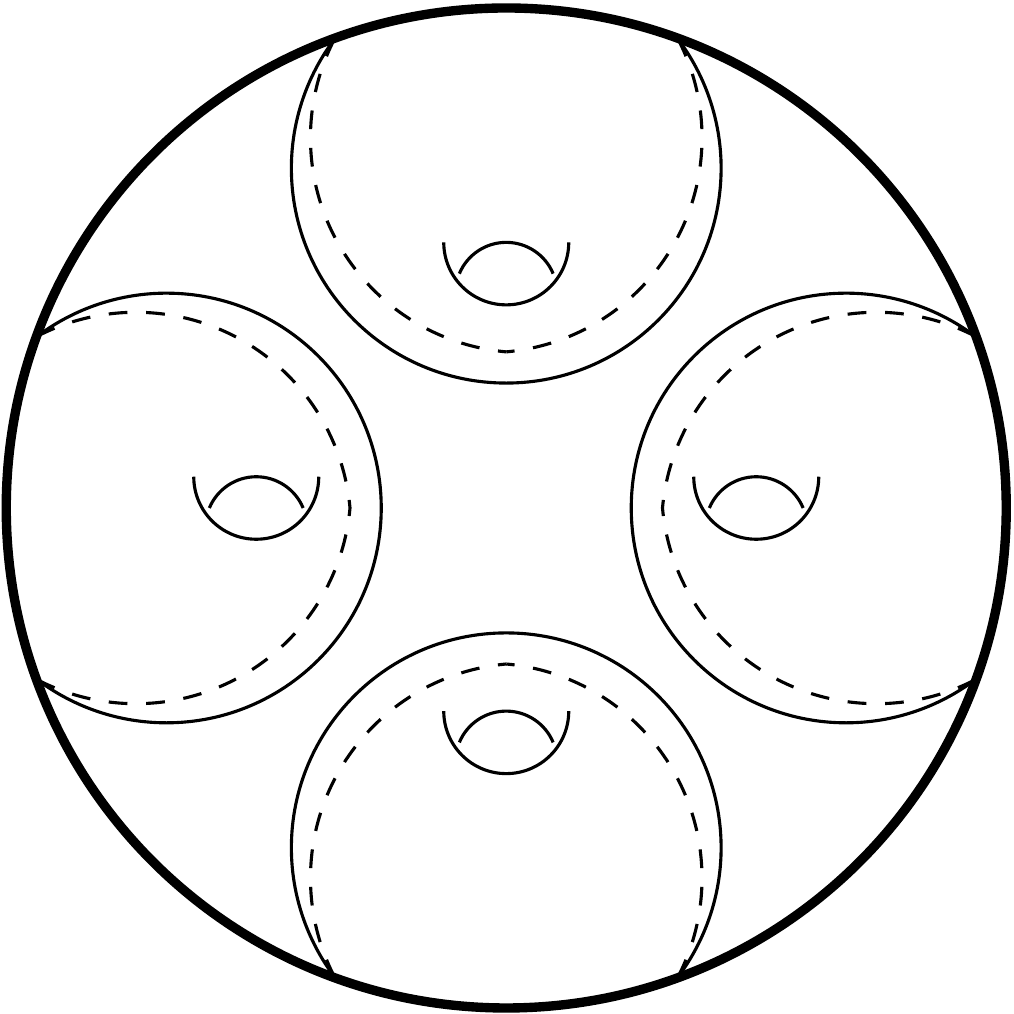}
	\centering
	\caption{\sl A $3$-simplex in the sphere complex of $W_4$, the double of a genus $4$ handlebody (alternatively, the connected sum of $4$ copies of $S^2 \times S^1$).}
	\label{standard}
\end{figure}

\subsection{Haken Spheres}
Our central aim will be to understand how essential spheres in $M$ interact with Heegaard splittings of $M$.
The following notion is crucial.
\begin{defn}
	Let $M = V \cup_S V'$ be a Heegaard splitting.  An essential sphere in $M$ that meets the Heegaard surface $S$ in a single simple closed curve is called a {\em Haken sphere}.  A (not necessarily essential) sphere that intersects $S$ in a single simple closed curve essential in $S$ is called a {\em reducing sphere}.
\end{defn}
The following theorem was originally proved by Haken in \cite{Haken}. Proofs can be found in the standard references on $3$-manifolds, see \cite{HempelBook}, \cite{JacoBook}, \cite{SchultensBook}.  

\begin{thm}[Haken's Lemma]\label{thm:hakens-lemma}
	If a closed three-manifold $M$ contains an essential sphere and $M = V \cup_S V'$ is a Heegaard splitting, then $M$ admits a Haken sphere. 
\end{thm}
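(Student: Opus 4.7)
The plan is to take any essential sphere $\sigma \subset M$ and isotope it so that $k := |\sigma \cap S|$ is minimal among all essential spheres in $M$; the goal is to show $k = 1$, so that the minimizer is a Haken sphere. The case $k = 0$ is ruled out immediately: a sphere disjoint from $S$ lies in one of the handlebodies $V, V'$, which are irreducible, so it would bound a ball and fail to be essential. If $k = 1$, the single intersection curve must be essential in $S$—otherwise a push-across isotopy would give $k = 0$—so $\sigma$ is a Haken sphere. The work is therefore to rule out $k \geq 2$.

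The easier case is when some curve $\alpha \in \sigma \cap S$ is inessential on $S$. Then $\alpha$ bounds a disk on $S$; I pick an innermost such disk $F \subset S$, so that $\mathrm{int}(F) \cap \sigma = \emptyset$. Cut $\sigma$ along $\partial F$ into two sub-disks and cap each with a parallel push-off of $F$ into $V$ and $V'$ respectively, producing two disjoint embedded spheres $\sigma_1, \sigma_2$ with $|\sigma_1 \cap S| + |\sigma_2 \cap S| = k - 1$. A standard tube-sum argument shows that if both $\sigma_i$ bounded balls then $\sigma$ itself would, contradicting essentiality; so some $\sigma_i$ is an essential sphere with strictly fewer intersections than $k$, contradicting minimality.

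The main obstacle is the remaining case, in which every curve of $\sigma \cap S$ is essential on $S$. Pick an innermost disk $D \subset \sigma$ in, say, $V$; then $D$ is a compressing disk for $S$ with essential boundary $\partial D = \alpha$. The surgery of the previous paragraph fails here, because capping $\sigma$ with parallel copies of $D$ on the $V$-side gives a sphere that still crosses $S$ transversally at $\alpha$. To make progress, I would analyze the bipartite dual tree $T$ of $\sigma \setminus S$ (vertices: components of $\sigma \setminus S$, coloured by which handlebody contains them; edges: curves of $\sigma \cap S$). If $T$ also has a leaf on the $V'$-side, giving an innermost disk $D' \subset \sigma$ in $V'$, then $D$ and $D'$ are compressing disks with disjoint essential boundaries, and a band-sum construction along an arc in $S$ produces an essential sphere with fewer intersections than $k$. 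Otherwise all leaves of $T$ lie in $V$, and a combinatorial analysis of $T$ together with irreducibility of $V'$ (applied to the planar components of $\sigma \cap V'$) again yields a simplification of $\sigma$. This last case is the most delicate point, and its careful execution is what the standard proofs in Hempel, Jaco, and Schultens carry out in full.
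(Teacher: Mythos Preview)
The paper does not prove this theorem: it is quoted as a classical result of Haken, with the reader referred to \cite{HempelBook}, \cite{JacoBook}, \cite{SchultensBook} for proofs. There is therefore nothing in the paper to compare your argument against; Haken's Lemma is used purely as a black box.

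As for your sketch itself: the set-up, the elimination of $k=0$, and the inessential-curve case are correct and standard. In the essential-curve case, however, the ``band-sum construction along an arc in $S$'' between an innermost disk $D \subset V$ and an innermost disk $D' \subset V'$ is not a viable move as stated. These disks lie on opposite sides of $S$ with distinct (generally non-parallel) boundary curves; banding them along an arc of $S$ does not produce a sphere, and there is no evident way to extract from the pair $D, D'$ alone an essential sphere meeting $S$ in fewer than $k$ circles. The textbook proofs you cite take a genuinely different route here: they fix complete meridian disk systems $\mathcal{D}, \mathcal{D}'$ for $V, V'$, refine the complexity to include $|\sigma \cap (\mathcal{D} \cup \mathcal{D}')|$, and argue by outermost-arc surgeries on the meridian disks, eventually reducing to an analysis inside the $3$--balls obtained by cutting $V, V'$ open along these systems. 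You are right that this is the delicate part, and since you ultimately defer to the same references the paper does, your proposal is in effect no less complete than the paper's own treatment---but the specific mechanism you outline for Case~B would not succeed without those additional ingredients.
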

In general, the Haken sphere is obtained by modifying the given essential sphere by surgery, and so cannot be guaranteed to
be related to the sphere given at the outset.

\section{Three-manifolds with spherical boundary}
In this section, we present versions of the results and notions of the previous section for $3$-manifolds with
spherical boundary. These appear naturally in our inductive proof of the Strong Haken Theorem (even if one is
just interested in proving it in the closed case).
For ease of notation, if $M$ is a $3$-manifold with spherical boundary, then we call each component of $\partial M$ a \emph{puncture}. Similarly, we call such a manifold a \emph{punctured manifold}.

\subsection{Heegaard Splittings}
To define Heegaard splittings of punctured manifolds, we use spotted handlebodies.
\begin{defn}
A {\em spotted} handlebody is a handlebody with a specified set of disks $D_1 \sqcup \dots \sqcup D_k$ in its boundary. Each disk is called a {\em spot}.  A Heegaard splitting of a $3$-manifold $M$ with spherical boundary is a decomposition $M = V \cup_S V'$ where $V, V'$ are spotted handlebodies with spots $D_1 \sqcup \dots \sqcup D_k$ and $D_1'  \sqcup \dots \sqcup D_k'$, respectively, and $S = \partial V - (D_1 \sqcup \dots \sqcup D_k) = \partial V' - (D_1' \sqcup \dots \sqcup D_k').$
\end{defn}

\begin{rem}
	In a Heegaard splitting of a $3$-manifold with spherical boundary each puncture meets the splitting surface in a single simple closed curve.  This simple closed curve is the boundary of a spot on each of the handlebodies.
	
	Suppose $M_1, M_2$ are two punctured manifolds, with boundary components $\partial_i \subset M_i$, and $M = M_1\cup_{\partial_1=\partial_2} M_2$ is the manifold obtained by gluing the boundary components.
	Given Heegaard splittings of $M_1, M_2$, the manifold $M$ inherits a Heegaard splitting which is obtained by
	gluing the handlebodies at the corresponding spots.
	
	The glued boundary components yield an essential $2$--sphere $\sigma$ in $M$, which intersects the induced
	Heegaard splitting in a single circle (i.e. it becomes a Haken sphere). Conversely, given a Haken sphere $\sigma$ for any manifold $M$, one can cut the manifold and the splitting at $\sigma$.
\end{rem}

We need a version of Waldhausen's Theorem in the context of punctured $3$-spheres (which is a fairly straightforward consequence of Waldhausen's theorem for $S^3$).
\begin{thm}[Waldhausen's Theorem for punctured $3$-spheres] \label{markedWaldhausen}  Every Heegaard splitting of a punctured $3$-sphere is a stabilization of a unique standard genus $0$ Heegaard splitting.  
\end{thm}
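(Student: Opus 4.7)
The plan is to reduce to the closed case (Theorem~\ref{Waldhausen}) by capping off all punctures with $3$-balls. The standard genus zero splitting is constructed by starting with the standard splitting sphere of $S^3$ and removing small balls straddling it equatorially; uniqueness up to isotopy follows from the Sch\"onflies theorem, since the sphere is uniquely embedded in $S^3$ and the spot configuration is then determined.

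For the main assertion, induct on the genus $g$ of the splitting $M = V \cup_S V'$, the base case $g = 0$ being the uniqueness just noted. When $g \ge 1$, cap off each puncture $\partial_i$ with a $3$-ball $\hat B_i$, split by a disk $E_i$ into two half-balls, one glued to $V$ along its spot $D_i$ and the other to $V'$ along $D_i'$. This extends $V$ and $V'$ to handlebodies $\hat V, \hat V'$ of the same genus $g$, and $S$ to a closed Heegaard surface $\hat S$ of $\hat V \cup_{\hat S} \hat V' = S^3$. Since $g \ge 1$, Theorem~\ref{Waldhausen} supplies a stabilizing pair $(D, D')$ with $D \subset \hat V$, $D' \subset \hat V'$, and $|\partial D \cap \partial D'| = 1$.

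The key step is to isotope $(D, D')$ off all the $\hat B_i$, so that the pair lies in $V \cup V'$. First we remove intersections of $\partial D$ and $\partial D'$ with $\bigcup_i \partial E_i$ by outermost-arc surgery in the $E_i$'s: choose an arc of $\partial D \cup \partial D'$ outermost among all such arcs in its containing $E_i$; the subdisk it cuts off is disjoint from both $\partial D$ and $\partial D'$, so pushing the arc across it strictly reduces the total intersection count while preserving $|\partial D \cap \partial D'|=1$. Both $\partial D$ and $\partial D'$ are essential in $\hat S$, as a null-homotopic curve cannot be crossed transversely in a single point by another closed curve; hence the surgery terminates with $\partial D$ and $\partial D'$ disjoint from all $E_i$. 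Next, any remaining interior intersections $D \cap E_i$ and $D' \cap E_i$ are eliminated by innermost-disk surgery inside the irreducible handlebodies $\hat V$ and $\hat V'$, which fixes the boundaries. The resulting $D, D'$ are disjoint from every $\hat B_i$ and hence lie in $V$ and $V'$.

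Applying destabilization to this pair produces a Heegaard splitting of the same punctured $3$-sphere of genus $g - 1$, and the inductive hypothesis finishes the argument. The main obstacle is the bookkeeping that ensures $|\partial D \cap \partial D'| = 1$ is preserved at every surgery; this is handled by consistently working with an arc that is outermost across all the $E_i$'s among both curves, so that the isotopy subdisk is automatically disjoint from the other curve.
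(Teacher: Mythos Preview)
Your overall strategy matches the paper's: cap off the punctures, apply Waldhausen's theorem in $S^3$, arrange the stabilizing pair to miss the added balls, and destabilize. Your explicit outermost-arc/innermost-disk argument for pushing $(D,D')$ off the balls is a more detailed version of the paper's one-line remark that the attached balls are ``arbitrarily small''; this is fine, though note a notational slip in your second surgery step: once $\partial D$ avoids the equatorial disks $E_i\subset\hat S$, there are no intersections $D\cap E_i$ left (since $E_i$ lies in $\partial\hat V$). What you actually need to remove are the circle intersections $D\cap D_i$ with the original \emph{spot} disks $D_i$, which are now properly embedded in $\hat V$ and separate the half-balls $\hat B_i^+$ from $V$. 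With that correction your argument goes through.

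The genuine gap is in your base case. Saying that genus~$0$ uniqueness ``follows from the Sch\"onflies theorem, since the sphere is uniquely embedded in $S^3$ and the spot configuration is then determined'' skips the real content. Sch\"onflies gives an isotopy of $S^3$ taking $\hat S_1$ to $\hat S_2$, but that isotopy will in general move the filled-in balls $\hat B_i$; you still have to produce an isotopy of $S^3$ that \emph{preserves each $\hat B_i$} and carries one splitting sphere to the other, which is exactly the statement at issue. The paper handles this by a different (and necessary) argument: a genus~$0$ spotted handlebody $V$ in the punctured $S^3$ is a regular neighbourhood of a tree consisting of one interior vertex joined by arcs to each boundary sphere, and any two such trees are isotopic by the Lightbulb Trick (sliding an arc endpoint around a boundary $S^2$ to undo crossings). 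That is where the work lies; your Sch\"onflies sentence does not substitute for it.
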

\begin{proof}
	Let $M$ be a punctured $3$-sphere and $M = V \cup_S V'$ a Heegaard splitting.  Construct $\hat M = S^3$ from $M$ by attaching a $3$-ball to each puncture.  By Alexander's Theorem, the result does not depend on the attaching map.  Moreover, the attaching maps can be chosen so that a meridional disk of each $3$-ball caps off a component of $\partial S$.  We thus obtain a closed surface $\hat S$ that defines a Heegaard splitting $S^3 = \hat V \cup_{\hat S} \hat V'$.  
	
	Each $3$-ball that has been attached to a puncture is a regular neighborhood of a point and, as such, arbitrarily small.  By Waldhausen's Theorem, $S^3 = \hat V \cup_{\hat S} \hat V'$ is a stabilization of the standard genus $0$ Heegaard splitting of $S^3$.  The stabilizing pairs of disks can be chosen to be disjoint from the attached $3$-balls.  Thus, after destabilizing, if necessary, we may assume that $S$ is genus $0$.  
	
	Hence, to show the theorem, it suffices to show that any genus $0$ splitting of a punctured $S^3$ is standard.
	To this end, observe that the spotted genus $0$ handlebody
	$V\subset S^3$ can be isotoped to be a regular neighbourhood of a graph $\Gamma \subset V$. The graph $\Gamma$ can be chosen to have the following form: it has one vertex $v_0$ in the interior of $M$, and one vertex on each boundary component. Each vertex on a boundary component is joined to $v_0$ by an edge.
	Now, observe that any two such graphs are isotopic, as any two arcs from $v_0$ to a boundary sphere are isotopic, by the Lightbulb Trick.
	This shows that any two genus $0$ splittings of a punctured $S^3$ are isotopic.
\end{proof}

At this point, we briefly want to address the ambiguity appearing in the previous proof when filling the boundary and drilling it out again -- namely, one can isotope a pair of stabilizing disks across a puncture. This leads to a homeomorphism of the manifold preserving the Heegaard surface.
Given a Heegaard splitting of a 3-manifold, the \emph{Goeritz group} of the splitting is the group of isotopy classes of orientation preserving diffeomorphisms of the manifold that preserve the splitting.  Loosely speaking, the Goeritz group will be small if the surface automorphism that defines the Heegaard splitting is complicated relative to the handlebodies.  Conversely, the Goeritz group will be as large as possible in the case of $W_n$, the manifold for which this surface automorphism is the identity, and the Goeritz group is equal to the handlebody group.
Scharlemann finds a system of $4g+1$ generators for the Goeritz group of a handlebody (see \cite{ScharlG}).
On the other hand, the Goeritz group of the three-sphere is still largely mysterious. We refer the interested reader to the recent \cite{ScharlemannGoeritz} and there references therein.

\subsection{Sphere Complexes}
We now want to define a useful sphere complex for punctured manifolds. One obvious change is that for the vertices
one should also exclude \emph{peripheral}
spheres, i.e. spheres which are homotopic into the boundary (otherwise, such spheres are adjacent to any other vertex, rendering the resulting complex useless). However, even with this modification, the resulting sphere complex will be somewhat problematic for our purposes, as it may often be disconnected. Namely, suppose that $M_0$ is an aspherical three-manifold with infinite fundamental group. Let $M$ be the manifold obtained from $M_0$ by removing two open balls. The manifold $M$ admits many essential non peripheral spheres obtained by joining the two punctures by a nontrivial tube. In fact, by asphericity of $M_0$, any essential non peripheral sphere in $M$ is of this form. In particular, no two such are disjoint. 

\smallskip To sidestep this issue, we use the following variant of the sphere complex. 
\begin{defn}[Surviving Sphere complex]
	A sphere $S$ in a punctured $3$-manifold is \emph{almost peripheral} if it bounds a punctured $S^3$ in $M$.  If $S$ is not almost peripheral, then it is {\em surviving}.  
	
	The {\em surviving sphere complex} of a $3$-manifold $M$ is the simplicial complex ${\cal S}^s(M)$ determined by the following three conditions:
	\begin{enumerate}
		\item Vertices of ${\cal S}^s(M)$ correspond to isotopy classes of incompressible, embedded, surviving $2$-spheres;
		\item Edges of ${\cal S}^s(M)$ correspond to pairs of vertices with disjoint representatives;
		\item The complex ${\cal S}^s(M)$ is flag. 
	\end{enumerate}
\end{defn}
The terminology stems from the fact that the spheres ``survive filling in the punctures'' and is in analogy to the surviving curve complex used in the study of mapping class groups of surfaces, see {\em e.g.}, \cite{BHW, GLP}. 
It turns out that these complexes are much better behaved in our setting.
\begin{lem}\label{lem:connectivity-sphere-graph}
	Let $M$ be a $3$--manifold. Then the surviving sphere complex ${\cal S}^s(M)$ is connected (if it is nonempty).
\end{lem}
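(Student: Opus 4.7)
The plan is to imitate the standard innermost-disk surgery argument for connectivity of sphere complexes (see, e.g., \cite{HatcherStability}). A useful translation: letting $\hat M$ denote the closed manifold obtained from $M$ by capping each boundary sphere with a $3$-ball, a sphere $\sigma \subset M$ is incompressible and surviving if and only if it is essential (does not bound a $3$-ball) in $\hat M$. Indeed, a $3$-ball in $\hat M$ bounded by $\sigma$ either lies entirely in $M$ (so $\sigma$ is compressible) or absorbs some of the capping balls (so the region $\sigma$ bounds in $M$ is a punctured $S^3$, i.e.\ $\sigma$ is almost peripheral).

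Given two vertices $\sigma_1, \sigma_2 \in {\cal S}^s(M)$, I would first isotope them in $M$ to minimise the number of intersection circles. If they are already disjoint, they are joined by an edge. Otherwise, pick an innermost intersection circle $\alpha$ on $\sigma_1$, bounding a disk $D \subset \sigma_1$ with interior disjoint from $\sigma_2$; then $\alpha$ separates $\sigma_2$ into two disks $D'$ and $D''$. The standard surgery of $\sigma_2$ along $D$ produces two embedded spheres $\sigma_2', \sigma_2''$ obtained from $D' \cup D$ and $D'' \cup D$ by suitable push-offs; each is disjoint from $\sigma_2$ after a small perturbation, and each meets $\sigma_1$ in strictly fewer circles than $\sigma_2$ did.

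The crux is to show that at least one of $\sigma_2', \sigma_2''$ is essential in $\hat M$, hence a vertex of ${\cal S}^s(M)$. Suppose for contradiction both bound $3$-balls $B'$, $B''$ in $\hat M$. The three spheres $\sigma_2, \sigma_2', \sigma_2''$ cobound a regular neighbourhood $N$ of $D' \cup D \cup D''$ in $\hat M$; an Euler-characteristic computation together with the evident simple-connectivity of the spine shows $N$ is a $3$-punctured $S^3$. Now $\operatorname{int}(N)$ lies on one side of $\sigma_2'$, and $B'$ is either (a) on the same side, in which case $N \subset B'$, so $\sigma_2 \subset \operatorname{int}(B')$ and $\sigma_2$ bounds a ball in $B'$ by Schoenflies, or (b) disjoint from $\operatorname{int}(N)$; the same dichotomy holds for $B''$. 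Case (a) for either ball immediately contradicts the essentiality of $\sigma_2$. In the remaining case (b) for both, the balls cannot be nested: if $B' \subset B''$ then $\sigma_2' \subset \operatorname{int}(B'')$, so a tubular neighbourhood of $\sigma_2'$---including a slice of $\operatorname{int}(N)$---lies inside $B''$, contradicting $B'' \cap \operatorname{int}(N) = \emptyset$. Hence $B' \cap B'' = \emptyset$, and $N \cup B' \cup B''$ is a $3$-punctured $S^3$ with two punctures filled, i.e.\ a $3$-ball bounded by $\sigma_2$, again contradicting essentiality.

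Granting the claim, $\sigma_2'$ (say) is an adjacent vertex to $\sigma_2$ in ${\cal S}^s(M)$: it is disjoint from $\sigma_2$ after perturbation, and it cannot be isotopic to $\sigma_2$ by the minimality of intersections with $\sigma_1$. Iterating the reduction strictly decreases the intersection count with $\sigma_1$, so after finitely many steps we obtain a vertex disjoint from, and hence adjacent to, $\sigma_1$. The main technical point is the case analysis in the third paragraph; the rest is the standard induction on intersection number.
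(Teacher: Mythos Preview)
Your proof is correct and follows the same overall strategy as the paper's: innermost-disk surgery on $\sigma_2$ along $\sigma_1$, showing that at least one of the two surgered spheres remains a vertex of $\mathcal{S}^s(M)$, then inducting on intersection number.

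The only genuine difference is in how you justify the key step that at least one of $\sigma_2', \sigma_2''$ is surviving. You pass to the filled manifold $\hat M$ and run a pair-of-pants contradiction: if both surgered spheres bounded balls in $\hat M$, a case analysis on the position of these balls relative to the regular neighbourhood $N$ forces $\sigma_2$ itself to bound a ball. The paper instead argues directly: it first rules out compressibility of either surgered sphere using minimality of $|\sigma \cap \sigma'|$, and then observes that if (say) $\sigma^-$ is almost peripheral, one can slide $D$ across the ball it bounds in $\hat M$ to exhibit an isotopy from $\sigma^+$ to one of the original (surviving) spheres. Your translation ``incompressible and surviving in $M$ $\Leftrightarrow$ essential in $\hat M$'' lets you treat both failure modes at once, at the cost of a slightly longer case analysis; the paper's sliding argument is shorter but splits into two cases. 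Both are standard and equally valid.

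One minor remark: your sentence ``it cannot be isotopic to $\sigma_2$ by the minimality of intersections with $\sigma_1$'' is only literally justified at the first step, since you do not re-minimise before iterating. This is harmless---if $\sigma_2'$ happens to be isotopic to $\sigma_2$ you simply have the same vertex realised with fewer intersections and continue---but you may want to phrase the induction accordingly.
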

\begin{proof}
	Let $\sigma, \sigma'$ be two incompressible, embedded, surviving $2$-spheres in $M$. Up to isotopy, we may
	assume that $\sigma, \sigma'$ intersect transversely. Further, we may assume that up to isotopy, the number
	of intersection components $\sigma \cap \sigma'$ is minimal.
	
	Let $C \subset \sigma\cap \sigma'$ be an innermost
	intersection circle, i.e. suppose that it bounds a disk $D \subset \sigma$ with $D \cap \sigma' = \partial D = C$.
	Denote by $S^+, S^- \subset \sigma'$ the two disks bounded by $C$, and denote by $\sigma^\pm = S^\pm \cup D$ the
	two $2$--spheres obtained by disk-swapping. Observe that up to isotopy, both of these are disjoint from $\sigma'$, and intersect $\sigma$ 
	in at least one fewer circle than $\sigma'$.  If either $\sigma^+, \sigma^-$ were compressible, then we could reduce the number of components in $\sigma \cap \sigma'$ by sliding $\sigma$ over the ball bounded by the compressible sphere, which is impossible by our choice.
	
	Assume that $\sigma^-$ is almost peripheral. Then, after filling in the punctures of $M$, the spheres $\sigma$ and
	$\sigma^+$ are isotopic (by sliding $D$ over the now unpunctured ball bounded by $\sigma^-$). In particular, $\sigma^+$ is surviving, as the same is true for $\sigma$.
	
	Hence, at least one of $\sigma^{\pm}$ is surviving, and we are done.  Indeed, repeating this process produces a sequence of spheres corresponding to vertices in a path, in ${\cal S}^s(M)$, between $[\sigma]$ and $[\sigma']$. 
	\begin{figure}
		\includegraphics[width=.5\textwidth]{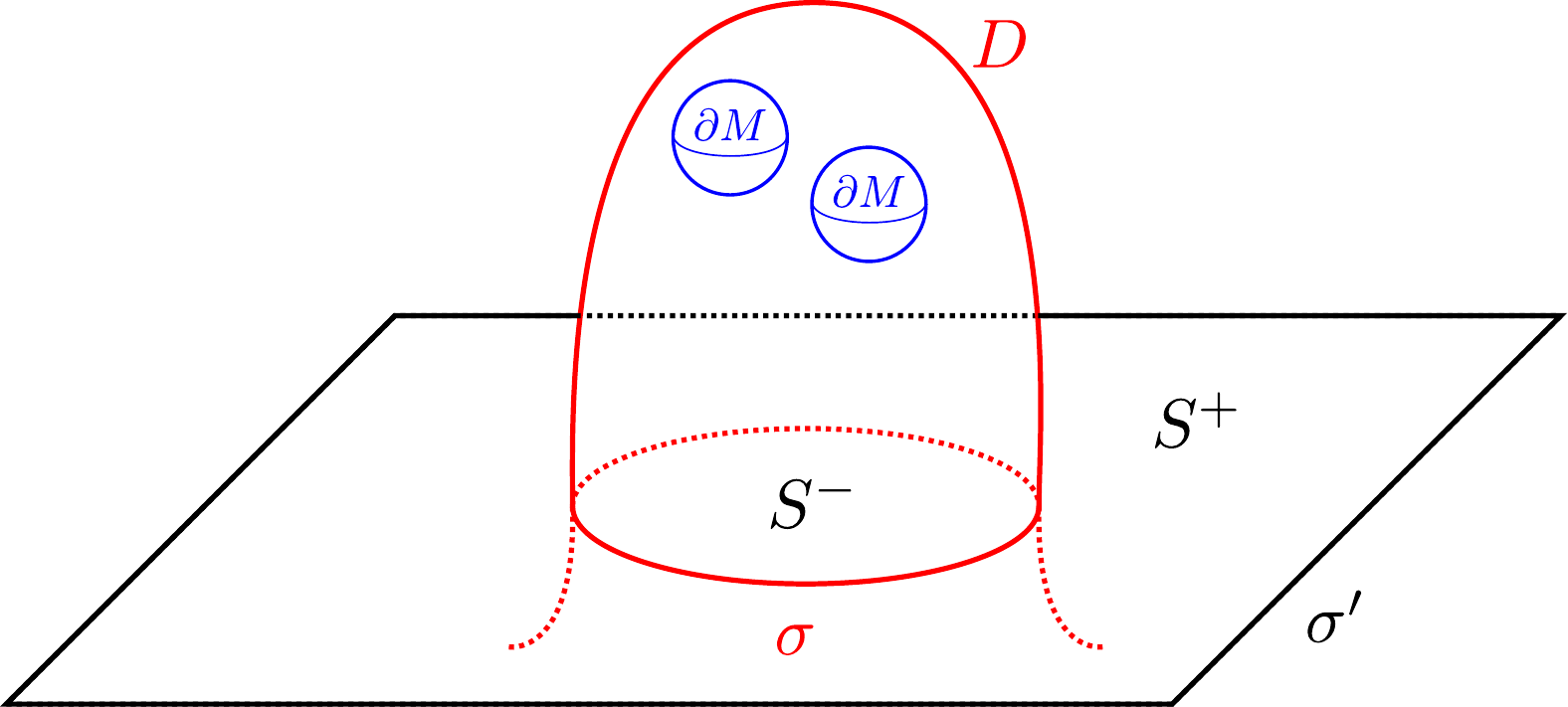}
		\centering
		\caption{\sl In the proof of Lemma~\ref{lem:connectivity-sphere-graph}: the innermost intersection circle
			of $\sigma, \sigma'$ cuts $\sigma'$ into two disks $S^+, S^-$. 
			If $\sigma^-$ is almost peripheral, then $\sigma^+$ is isotopic to $\sigma$ after filling the punctures.}
		\label{fig:nonperipheral1}. 
	\end{figure}
\end{proof}

\subsection{Haken Spheres}
Just as in the closed case, we call an essential sphere which intersects a Heegaard splitting of a punctured
manifold in a single curve a \emph{Haken sphere}. For punctured manifolds, almost peripheral and surviving Haken spheres behave slightly differently. 

On the one hand, using the same strategy as in the proof of Theorem~\ref{markedWaldhausen}, we obtain the 
following corollary of Theorem~\ref{thm:hakens-lemma}.
\begin{thm}[Surviving Haken's Lemma]\label{thm:nonpreperipheral-hakens-lemma}
If $M$ contains an surviving sphere and $M = V \cup_S V'$ is a Heegaard splitting, then there is a surviving Haken sphere.
\end{thm}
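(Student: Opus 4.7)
The plan is to follow the same capping-and-uncapping strategy used to prove Theorem~\ref{markedWaldhausen}: fill each puncture of $M$ with a $3$-ball to obtain a closed manifold $\hat M = \hat V \cup_{\hat S} \hat V'$, apply the classical Haken's Lemma (Theorem~\ref{thm:hakens-lemma}) to produce a Haken sphere $\tau \subset \hat M$, and then isotope $\tau$ back into $M$ so that it remains a Haken sphere for the induced splitting of $M$ and is moreover surviving.

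To set this up, attach a $3$-ball $B_i$ at each puncture of $M$. As in the proof of Theorem~\ref{markedWaldhausen}, the Heegaard splitting extends, each $B_i$ is split by $\hat S$ into two half-balls $B_i^\pm$ lying in $\hat V$ and $\hat V'$ respectively, and $B_i \cap \hat S$ is a cap disk $D_i^* \subset \hat S$. A surviving sphere $\sigma \subset M$ cannot bound a punctured $3$-ball in $M$, hence cannot bound a ball in $\hat M$, so $\sigma$ is essential in $\hat M$; Theorem~\ref{thm:hakens-lemma} then produces a Haken sphere $\tau \subset \hat M$ meeting $\hat S$ in a single essential simple closed curve $\alpha$.

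The main step is to isotope $\tau$ off $\bigcup B_i$ while preserving the property that $\tau \cap \hat S$ is a single curve. First I would isotope $\alpha$ within $\hat S$ so that $\alpha \cap D_i^* = \emptyset$ for every $i$: each $D_i^*$ is a disk, so an outermost arc of $\alpha \cap D_i^*$ can be pushed off across the sub-disk it cobounds with $\partial D_i^*$, and iterating this reduces the intersection to zero. This isotopy of $\alpha$ in $\hat S$ extends to an ambient isotopy of $\hat M$ preserving $\hat S$, after which $\alpha \subset S$ and the two disks $E := \tau \cap \hat V$ and $E' := \tau \cap \hat V'$ meet the internal disks $D_i$ and $D_i'$ (now in the interiors of $\hat V$ and $\hat V'$) only in closed curves. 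These can be removed by the standard innermost-circle argument inside the irreducible handlebody $\hat V$: an innermost circle on $D_i$ cobounds with a subdisk of $E$ a sphere in $\hat V$ that bounds a ball, through which one isotopes $E$ with $\alpha$ held fixed, reducing the number of intersection circles. The analogous argument in $\hat V'$ handles the $D_i'$. The resulting sphere $\tau'$ is disjoint from every $\partial B_i$; since it is isotopic to $\tau$ in $\hat M$, it is essential in $\hat M$ and hence not contained in any $B_i$, so $\tau' \subset M$ and $\tau' \cap S = \alpha$.

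It remains to verify that $\tau'$ is a surviving Haken sphere. Because $\tau'$ is isotopic to $\tau$ in $\hat M$, it does not bound a ball in $\hat M$, hence it is incompressible in $M$ and cannot bound a punctured $3$-ball in $M$ (such a region would fill up to a ball in $\hat M$); thus $\tau'$ is surviving. Finally, $\alpha$ is essential on $S$, because any disk on $S$ bounded by $\alpha$ is also a disk on $\hat S$ (the caps $D_i^*$ lie in $\hat S \setminus S$, hence outside such a disk), contradicting essentiality of $\alpha$ on $\hat S$. The main obstacle in this proof is the disk-pushing step: the two innermost-circle isotopies in $\hat V$ and $\hat V'$ must not create new intersections with $\hat S$, and it is precisely the prior arrangement of $\alpha$ disjoint from every cap $D_i^*$ that decouples the two sides and lets each isotopy be carried out in isolation.
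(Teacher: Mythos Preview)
Your proof is correct and follows essentially the same capping-and-uncapping strategy as the paper: fill the punctures with balls, observe the surviving sphere remains essential, apply the classical Haken's Lemma to the closed manifold, and isotope the resulting Haken sphere off the filled balls while preserving the Heegaard surface. The paper is terser, simply asserting ``by an isotopy preserving the Heegaard surface, we may assume that $\sigma'$ is disjoint from $B$,'' whereas you spell out the outermost-arc and innermost-circle details and explicitly verify the surviving condition, but the approach is identical.
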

\begin{proof}
	Denote by $\sigma$ an essential surviving sphere in $M$.
	Let $M'$ be the three-manifold obtained from $M$ by gluing a ball to each boundary component. Denote by
	$B\subset M'$ the disjoint union of the resulting balls. By definition
	of almost peripheral, the image of $\sigma$ in $M'$ is still essential. Thus, Haken's lemma 
	(Theorem~\ref{thm:hakens-lemma}) applies, and yields a Haken sphere $\sigma' \subset M'$. By an isotopy
	preserving the Heegaard surface, we may assume that $\sigma'$ is disjoint from $B$. We can thus interpret
	$\sigma'$ as a sphere in $M\subset M'$, where it is the desired Haken sphere.
\end{proof}

On the other hand, almost peripheral spheres are also Haken spheres:
\begin{lemma}[Almost Peripheral Strong Haken Theorem]\label{lem:preperipheral-strong-haken}
	Let $M$ be a three-manifold with at least two punctures, and $M = V \cup_S V'$ be a Heegaard splitting.
	Then any almost peripheral sphere $\sigma$ in $M$ is isotopic to a Haken sphere.
\end{lemma}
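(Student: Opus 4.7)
My plan is to construct a Haken sphere $\sigma'$ nested inside the punctured $S^3$ bounded by $\sigma$, and then exploit the resulting product structure to isotope $\sigma$ to $\sigma'$. Let $X \subset M$ denote the punctured $S^3$ bounded by $\sigma$, and let $\pi_1, \ldots, \pi_j$ be the components of $\partial M$ contained in $X$; by essentiality of $\sigma$ together with the standing hypothesis on $M$, $j \geq 2$. Each $\pi_i$ meets $S$ in a curve $\gamma_i$ bounding spots $D_i \subset \partial V$ and $D_i' \subset \partial V'$.

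To construct $\sigma'$, I would first isotope $\sigma$ (via a minimal-intersection argument) so that $S \cap X$ is a connected subsurface of $S$ containing every $\gamma_i$ in its boundary. Then choose an embedded tree $T \subset S \cap X$ with exactly $j$ leaves, one on each $\gamma_i$. A regular neighborhood in $V$ of the contractible subcomplex $T \cup D_1 \cup \cdots \cup D_j \subset \partial V$ is a $3$-ball $N_V$, whose boundary sphere decomposes as the spots $D_i$, a subsurface $N_S \subset S$, and a single capping disk $E \subset V$ whose boundary $\alpha$ is the unique new boundary curve of $N_S$. The symmetric construction in $V'$ using the same tree $T$ produces a ball $N_{V'}$ and disk $E' \subset V'$ with $\partial E' = \alpha$, and I set $\sigma' := E \cup E'$. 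By construction $\sigma'$ meets $S$ in the single curve $\alpha$, and a Van Kampen computation shows $N_V \cup_{N_S} N_{V'}$ is simply connected with spherical boundary $\sigma' \cup \pi_1 \cup \cdots \cup \pi_j$, hence is a punctured $S^3$. So $\sigma'$ is an essential Haken sphere, and by the choice $T \subset S \cap X$ the ball $N_V \cup N_{V'}$ lies in $\mathrm{int}(X)$, making $\sigma' \subset \mathrm{int}(X)$ and disjoint from $\sigma$.

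For the isotopy $\sigma \simeq \sigma'$, fill each component of $\partial M$ by a $3$-ball to form a closed manifold $\hat M$ with marked points $c_1, \ldots, c_N$. Then $\sigma$ and $\sigma'$ bound $3$-balls $B$ and $B'$ in $\hat M$ (namely, $X$ and $N_V \cup N_{V'}$, each united with the filling balls of $\pi_1, \ldots, \pi_j$), and by the nesting above $B' \subset \mathrm{int}(B)$, with both containing exactly the marked points $\{c_1, \ldots, c_j\}$. The region $B \setminus \mathrm{int}(B')$ is a compact simply connected $3$-manifold (by Van Kampen) with boundary $\sigma \cup \sigma'$ two spheres, hence it is a product $S^2 \times I$. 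Isotoping $\sigma$ across this product region to $\sigma'$ is supported away from all marked points, yielding $\sigma \simeq \sigma'$ in $\hat M$ rel the marked points, which descends to an isotopy in $M$.

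The main obstacle I foresee is the preliminary isotopy arranging $S \cap X$ to be connected: if $\sigma \cap S$ separates $S \cap X$ into several pieces, no single spanning tree $T$ is available, and a more careful argument (further isotoping $\sigma$, or combining trees from different components via tubes through the complementary region while keeping the final $\sigma'$ inside $X$) is required. Once connectivity is in hand, the tubing construction, the Van Kampen identification of $N_V \cup N_{V'}$ as a punctured $S^3$, and the collar isotopy are all straightforward.
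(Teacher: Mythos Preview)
The gap you flag at the end is genuine and is not a technicality. Your product-region isotopy requires $\sigma' \subset \mathrm{int}(X)$, which in turn requires the tree $T$ to lie in $S \cap X$; but a minimal-intersection position for $\sigma$ only forces the curves of $\sigma \cap S$ to be essential in $S$, and essential curve systems can certainly separate the circles $\gamma_1,\ldots,\gamma_j$ into different components of $S \cap X$. Your proposed repairs do not work either. If you drop the requirement $T \subset X$ and just take $T \subset S$, the resulting Haken sphere $\sigma'$ need not be isotopic to $\sigma$ at all: two almost peripheral spheres cutting off the \emph{same} set of punctures are in general not isotopic (e.g.\ in a twice-punctured lens space, tubing $\pi_1$ to $\pi_2$ along arcs in distinct classes of $\pi_1$ yields non-isotopic almost peripheral spheres). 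So the nesting $\sigma' \subset X$ is exactly what makes your isotopy go through, and it is exactly what you cannot arrange.

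The paper's proof avoids this by trading the product-region isotopy for a homotopy argument plus Laudenbach's theorem. For $j=2$ it writes $\sigma$ as the boundary of a neighbourhood of $\pi_1 \cup \alpha \cup \pi_2$ with $\alpha$ an arc \emph{in $X$} (hence in the correct homotopy class), then homotopes $\alpha$ into $S$ using only that arcs in a handlebody are homotopic into the boundary, and resolves the resulting self-intersections by sliding subarcs over a spot. The new sphere is then only \emph{homotopic} to $\sigma$, and Laudenbach's theorem (homotopic spheres in a $3$--manifold are isotopic) closes the gap; the case $j>2$ is handled by induction after cutting along a $2$--puncture Haken sphere. Your attempt to replace Laudenbach by a direct collar isotopy is what forces you to stay inside $X$, and that is precisely where the argument breaks.
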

\begin{proof}
	We begin with the case where $\sigma$ cuts off exactly two punctures $\delta_1, \delta_2$. The almost peripheral sphere
	$\sigma$ is then isotopic to the boundary of a regular neighbourhood of $\delta_1 \cup \alpha \cup \delta_2$, where 
	$\alpha \subset M$ is a properly embedded arc. We may homotope $\alpha$ to lie in $S$, as any arc in a handlebody
	is homotopic into the boundary. However, the arc may now not be embedded anymore. We can remove the self-intersections by ``popping subarcs over $\delta_1\cap S$''. 
	\begin{figure}
		\includegraphics[width=.5\textwidth]{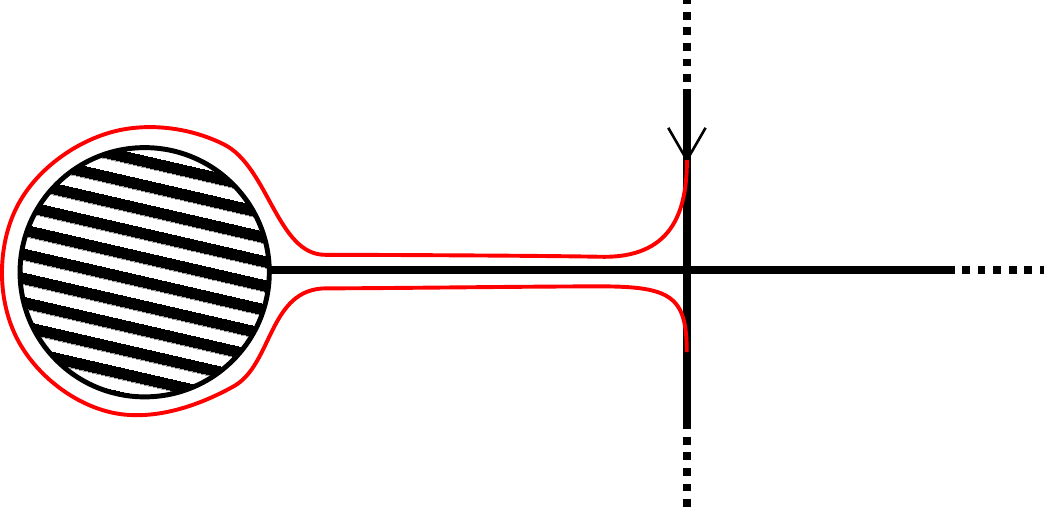}
		\centering
		\caption{\sl Removing self-intersections of an arc joining two spots.}
		\label{fig:resolve-intersection}. 
	\end{figure}
	To be more precise, parametrise $\alpha:[0,1]\to S$ so that it starts on $\delta_1\cap S$, and homotope so that all self-intersections are transverse.
	Consider the first self-intersection point $\alpha(t)=\alpha(s), t < s$. In particular, this implies that
	$\alpha\vert_{[0,t]}$ is an embedded arc. 
	
	Now homotope a small subarc $\alpha\vert_{[s-\epsilon,s+\epsilon]}$ to instead be the arc obtained by following 
	$\alpha\vert_{[0,t]}$ backwards to $\delta_1\cap S$, following around $\delta_1\cap S$, and returning along
	$\alpha\vert_{[0,t]}$ (compare Figure~\ref{fig:resolve-intersection}). This homotopy is possible in $V$, and the resulting arc has at least one fewer self-intersection.
	
	After a finite number of modifications of this type, the boundary of a regular neighbourhood of $\delta_1\cup\alpha\cup\delta_2$ (which is 
	homotopic to $\sigma$) interects $S$ in a single curve, and thus is a Haken sphere. By a theorem of Laudenbach, see \cite{Laudenbach73}, homotopy and isotopy are the same for spheres in $3$--manifolds, hence the claim follows\footnote{One could also avoid citing this theorem by isotoping $\alpha$ into a regular neighbourhood of $S$ and resolving crossings of the projection to $S$ by isotopies which slide strands over the puncture similar to Figure~\ref{fig:resolve-intersection}.}.
	
	Now we suppose $\sigma$ is a sphere cutting off $k>2$ punctures. Then there is a sphere $\sigma'$, disjoint from $\sigma$, which cuts off $2$ punctures, and which is contained in the punctured $S^3$ bounded by $\sigma$. By the initial case, $\sigma'$ may be assumed to be Haken. Let $M'$ be the manifold obtained by cutting $M$ at $\sigma'$, with the induced Heegaard splitting; 
observe that $\sigma \subset M'$ is still almost peripheral, but now cuts off at most $k-1$ spheres. By induction, $\sigma$ is a Haken sphere.
\end{proof}
Since any essential, non peripheral sphere in a punctured $S^3$ is almost peripheral, this implies the following:
\begin{cor}[Strong Haken Theorem for punctured $3$--spheres]\label{cor:strong-haken-s3}
	Any essential sphere in a punctured $3$-spheres is isotopic to a Haken sphere.
\end{cor}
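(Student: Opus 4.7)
The plan is to reduce to Lemma~\ref{lem:preperipheral-strong-haken} (the Almost Peripheral Strong Haken Theorem) by observing that every essential sphere in a punctured $S^3$ is automatically almost peripheral; this is the content of the sentence that immediately precedes the corollary. Fix a punctured $S^3$, which I will call $M$, equipped with a Heegaard splitting $M = V \cup_S V'$, and let $\sigma \subset M$ be an essential (incompressible, non-peripheral) sphere.

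First I would cap off each puncture of $M$ to obtain $\hat M \cong S^3$; by Alexander's theorem the result is independent of how the caps are glued in. The sphere $\sigma$ persists as an embedded sphere in $\hat M \cong S^3$, so by the Schoenflies theorem it bounds two $3$-balls $B_1, B_2 \subset \hat M$. Restricting to $M$, each side $B_i \cap M$ is a $3$-ball with finitely many open balls removed, that is, a punctured $S^3$. By definition this means $\sigma$ is almost peripheral.

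Next I would verify the hypotheses of Lemma~\ref{lem:preperipheral-strong-haken}. Incompressibility and non-peripherality of $\sigma$ force each $B_i \cap M$ to contain at least two punctures: if one side had zero punctures, then $\sigma$ would bound a $3$-ball in $M$, contradicting incompressibility; and if one side had exactly one puncture, then $\sigma$ would be isotopic to that boundary component, contradicting non-peripherality. In particular $M$ has at least two punctures overall, so Lemma~\ref{lem:preperipheral-strong-haken} applies and produces a Haken sphere isotopic to $\sigma$, as desired.

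The proof is essentially a bookkeeping exercise given the preceding lemma: the substantive geometric work has already been carried out in the proof of Lemma~\ref{lem:preperipheral-strong-haken}, while the only $S^3$-specific ingredient required is the Schoenflies theorem. There is no serious obstacle; the only point demanding a touch of care is checking that ``essential'' rules out the degenerate cases (zero or one puncture on a side) where the almost peripheral strong Haken theorem would not directly apply.
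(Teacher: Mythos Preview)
Your proof is correct and follows the same approach as the paper: the corollary is deduced immediately from Lemma~\ref{lem:preperipheral-strong-haken} via the observation (the sentence preceding the corollary) that every essential non-peripheral sphere in a punctured $S^3$ is almost peripheral. You have simply filled in the details of that observation (Schoenflies/Alexander) and the easy check that $M$ has at least two punctures, which the paper leaves implicit.
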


\section{Heegaard splittings of $n(S^1 \times S^2)$} \label{HSWn}

In this section, we study Heegaard splittings of a specific manifold, namely
\begin{defn}
	We denote the double of the genus $n$ handlebody by $W_n$.  It is the connected sum of $n$ copies of $S^2 \times S^1$. 
\end{defn}
A reader only interested in the strong Haken theorem may safely skip ahead to the next section.
Our goal here will be to prove that, similar to Waldhausen's theorem for the three-sphere, all Heegaard
splittings of $W_n$ are ``standard'' in the following sense. 

\begin{defn}
	A Heegaard splitting of $W_n$ is {\em standard} if it is the double of a genus $n$ handlebody.  A {\em standard} Heegaard splitting of $W_n$ is a Heegaard splitting that is the connected sum of $n$ copies of $W_1$ with the standard Heegaard splitting.
\end{defn}

Waldhausen seems to claim in \cite{Waldhausen1968} that all Heegaard splittings of $W_n$ are standard (although
it is not entirely clear up to which equivalence relation, and the proof sketch is incomplete). In the unpublished preprint \cite{Oertel}, Oertel and Navarro Carvalho prove the result, using results on the homeomorphism groups of handlebodies and $W_n$ (in a very similar way to the argument we will use below). In this section, we show that these techniques could also be used to prove a Strong Haken theorem (and obtain the uniqueness of splittings as a corollary). We want to emphasize that this of course follows from the 
general Strong Haken theorem (Theorem~\ref{GStrongHaken}), but consider the argument using homeomorphisms
of $W_n$ interesting enough to warrant this alternate proof.
\smallskip

\begin{prop} \label{genus1}
Every unstabilized Heegaard splitting of $W_1$ is standard.  
\end{prop}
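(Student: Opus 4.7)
The plan is to reduce the proposition to Waldhausen's theorem for punctured $3$-spheres (Theorem~\ref{markedWaldhausen}) by cutting along a Haken sphere. Fix an unstabilized Heegaard splitting $(W_1, S)$ of genus $g \geq 1$. Since $W_1$ contains essential spheres, Haken's Lemma (Theorem~\ref{thm:hakens-lemma}) furnishes a Haken sphere $\sigma$ for $(W_1,S)$. As $W_1 = S^2 \times S^1$ is prime, no essential sphere in $W_1$ can separate; in particular $\sigma$ is non-separating, and so the meridian disks $D \subset V$, $D' \subset V'$ with $\sigma = D \cup D'$ are non-separating in their handlebodies, forcing the curve $\alpha = \sigma \cap S$ to be non-separating on $S$.

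Cutting $W_1$ along $\sigma$ therefore produces a connected twice-punctured $3$-sphere $M'$ carrying an induced spotted Heegaard splitting whose surface $S' = S \setminus \alpha$ has genus $g-1$. The key step I would prove is that this induced splitting of $M'$ is itself unstabilized. Suppose it were not, so that a stabilizing pair of disks $(E, E')$ exists in $M'$. Since the spots are disks, $(E, E')$ can be isotoped to be disjoint from the two spot spheres; then after regluing the boundary spheres to recover $W_1$, the same pair $(E, E')$ remains a stabilizing pair for $(W_1, S)$, contradicting the hypothesis. Applying Theorem~\ref{markedWaldhausen} to the now-unstabilized splitting of $M'$ forces $S'$ to be the standard genus $0$ splitting surface of $M'$, and so $g = 1$.

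It remains to recognize the resulting genus $1$ splitting of $W_1$ as standard. Here I would argue by direct inspection: the standard genus $0$ splitting of a twice-punctured $S^3$ consists of two $3$-balls with two spots each, meeting along an annulus; regluing the two spot spheres converts each $3$-ball into a solid torus and the annulus into a torus, with $\alpha$ playing the role of a common meridian disk curve on both sides. This is exactly the double of a genus $1$ handlebody, hence the standard splitting of $W_1$. The main delicate point of the argument is the transfer of the unstabilization hypothesis from $(W_1, S)$ to the cut splitting of $M'$, which requires some care about isotoping the stabilizing disks off the spots -- but this is clean, since the spots are disks and any intersection with a properly embedded disk can be removed by innermost-disk/outermost-arc arguments.
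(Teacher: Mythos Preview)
Your proposal is correct and follows essentially the same route as the paper: apply Haken's Lemma to obtain a Haken sphere, cut along it to get a twice-punctured $S^3$ with an inherited splitting, invoke Theorem~\ref{markedWaldhausen}, and reassemble to see the genus~$1$ splitting is the double of a solid torus. Your write-up is in fact slightly more careful than the paper's in two places: you explicitly argue that $\alpha=\sigma\cap S$ is non-separating on $S$, and you justify the transfer of the unstabilized hypothesis to the cut manifold (the paper simply asserts that if the original splitting is unstabilized then the inherited one has genus~$0$). Note, incidentally, that your worry about isotoping the stabilizing disks off the spots is unnecessary---by definition their boundaries already lie in $S'$, hence miss the spots entirely.
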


\proof
Suppose that $W_1 = V \cup_S V'$ is a Heegaard splitting.  We wish to show that $W_1 = V \cup_S V'$ is standard.  Since $W_1$ is reducible, Haken's lemma tells us that there is a Haken sphere $R$ for $W_1 = V \cup_S V'$.  Denote $V \cap R$ by $D$ and $V' \cap R$ by $D'$.  Note that all essential spheres in $W_1$, in particular $R$, are isotopic to $S^2 \times (point)$. 

We may assume that $S$ intersects a bicollar of $R$ in an annulus $(S \cap R) \times [-1,1]$.  Removing this bicollar leaves a copy of $S^2 \times [-1,1]$, {\it i.e.,} a twice punctured $3$-sphere that inherits a Heegaard splitting.  By Theorem \ref{markedWaldhausen}, this Heegaard splitting is either of genus $0$ or stabilized.

Since $W_1 = V \cup_S V'$ is unstabilized, the Heegaard splitting obtained on the complement of $S^2 \times [-1,1]$ must be of genus $0$.  Specifically, the splitting surface is a twice punctured $2$-sphere, {\it i.e.,} an annulus.  Hence we can reconstruct $W_1 = V \cup_S V'$:  Indeed, say, $V$, is composed of a $3$-ball attached to the two copies $D \times \{\pm 1\}$ of $D$.  It follows that $V$ is a solid torus.  
The same is true of $V'$ and hence $W_1 = V \cup_S V'$ is the standard Heegaard splitting. 
\qed

First, we have
the following classical result due to  Griffiths \cite{Griffiths}.
\begin{thm}\label{thm:griffiths}
	The action of the mapping class group of a handlebody $V_n$ on its fundamental group $\pi_1(V_n) = F_n$ induces
	a surjection
	\[ \mathrm{Mcg}(V_n) \to \mathrm{Out}(F_n) \to 1. \]
\end{thm}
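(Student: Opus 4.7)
The plan is to realize every element of a standard generating set of $\mathrm{Out}(F_n)$ by an explicit self-homeomorphism of $V_n$. First I would fix the identification of $V_n$ with a regular neighbourhood in $S^3$ of a wedge of $n$ circles $c_1,\ldots,c_n$ meeting at a point $p$, so that $\pi_1(V_n,p)$ is canonically the free group $F_n = \langle x_1,\ldots,x_n\rangle$, with $x_i$ the homotopy class of $c_i$. For a self-homeomorphism $f$ of $V_n$ the induced map $f_*$ on $\pi_1$ depends on a choice of path from $p$ to $f(p)$, but only up to conjugation, so we obtain a well-defined outer automorphism $[f_*]\in\mathrm{Out}(F_n)$. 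Isotopic homeomorphisms induce the same outer automorphism, so there is a well-defined homomorphism $\Psi: \mathrm{Mcg}(V_n)\to\mathrm{Out}(F_n)$.

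Next I would invoke the classical theorem of Nielsen: $\mathrm{Aut}(F_n)$, and hence $\mathrm{Out}(F_n)$, is generated by three families of automorphisms --- the transpositions $\sigma_{ij}$ swapping $x_i$ and $x_j$, the inversions $\iota_i$ sending $x_i\mapsto x_i^{-1}$, and the transvections $\tau_{ij}$ sending $x_i\mapsto x_i x_j$ (in each case fixing the other basis elements). Thus it suffices to exhibit a preimage in $\mathrm{Mcg}(V_n)$ of each such generator under $\Psi$.

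I would then write down explicit homeomorphisms:
a transposition $\sigma_{ij}$ is realised by an ambient isotopy of $S^3$ rigidly swapping the $i$-th and $j$-th handle of $V_n$;
an inversion $\iota_i$ is realised by a half-turn of the $i$-th handle about an axis through $p$ orthogonal to its core, which reverses the orientation of $c_i$ while fixing the other $c_k$;
and a transvection $\tau_{ij}$ is realised by a handle slide of one foot of the $i$-th one-handle along a loop representing $x_j$, extended by the identity outside a neighbourhood of the slide.

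The routine but slightly delicate point is verifying that the handle slide induces exactly $\tau_{ij}$ (and not some conjugate or product of $\tau_{ij}$ with inner automorphisms) once a consistent choice of basepoint tracking is made; this is a direct computation by following how $c_i$ is homotoped through the slide and reading off the resulting word in $x_1,\ldots,x_n$. Once this identification is confirmed for each family, the image of $\Psi$ contains all Nielsen generators, and therefore $\Psi$ is surjective, proving the theorem. The main conceptual obstacle is really just the bookkeeping of basepoints and orientations in the handle slide; all other generators correspond to visibly symmetry-preserving homeomorphisms of $V_n$.
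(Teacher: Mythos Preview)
The paper does not prove this theorem at all: it is quoted as a classical result due to Griffiths, with a citation, and no argument is given. So there is no ``paper's own proof'' to compare your proposal against.

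That said, your sketch is the standard and correct route to the result, and is essentially what Griffiths does: fix a rose as a spine of $V_n$, and realise each Nielsen generator of $\mathrm{Out}(F_n)$ by an explicit homeomorphism of the handlebody (handle permutations for $\sigma_{ij}$, a handle ``spin'' for $\iota_i$, and a handle slide for $\tau_{ij}$). The only place to be a bit more careful than you indicate is the inversion: the phrase ``half-turn of the $i$-th handle about an axis through $p$'' is slightly imprecise, since such a rigid rotation will move the other handles as well. The clean way is to take a small $3$--ball $B'$ containing only the $i$-th handle and its two feet, perform a rotation of $B'$ by $\pi$ about the axis of symmetry of that handle, and damp the rotation to the identity near $\partial B'$ (using that $SO(3)$ is path-connected). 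This gives a homeomorphism of $V_n$ supported in $B'$ that reverses $c_i$ and fixes the other $c_k$. With that adjustment your argument goes through.
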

We remark that the kernel of this map is quite complicated, and generated by twists about disk-bounding
curves \cite{Luft}.
Next, we need a theorem of Fran\c{c}ois Laudenbach
\cite{Laudenbach73} (see also \cite{Laudenbach74} and \cite{BrendleBroaddusPutman} for a modern proof):
\begin{thm}\label{thm:laudenbach}
	The action of the mapping class group of a doubled handlebody $W_n$ on its fundamental group $\pi_1(W_n) = F_n$ induces
	a short exact sequence
	\[ 1 \to K \to \mathrm{Mcg}(W_n) \to \mathrm{Out}(F_n) \to 1. \]
	The kernel $K$ is finite, generated by Dehn twists about non-separating spheres, and acts trivially 
	on the isotopy class of every embedded sphere or loop.
\end{thm}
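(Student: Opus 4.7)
The plan is to proceed in three phases: establish surjectivity of $\mathrm{Mcg}(W_n) \to \mathrm{Out}(F_n)$; identify the kernel $K$ by induction on $n$ using the Strong Haken and Waldhausen theorems for punctured manifolds; and then verify that sphere twists act trivially on isotopy classes of spheres and loops.

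For surjectivity, write $W_n = V_n \cup_\Sigma V_n$ as the double of the handlebody $V_n$ along its boundary surface $\Sigma$. Any mapping class of one copy of $V_n$ extends to $W_n$ by the identity on the other copy, giving a homomorphism $\mathrm{Mcg}(V_n) \to \mathrm{Mcg}(W_n)$. The induced action on $\pi_1(W_n) = F_n$ factors through the inclusion $V_n \hookrightarrow W_n$ and recovers the Griffiths surjection $\mathrm{Mcg}(V_n) \twoheadrightarrow \mathrm{Out}(F_n)$ from Theorem~\ref{thm:griffiths}, so the composition is onto.

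For the kernel, let $\phi \in K$ and pick a non-separating sphere $R \subset W_n$, which determines a free splitting $F_n = F_{n-1} \ast \mathbb{Z}$ up to conjugacy. The first step is to show that $\phi$ preserves the isotopy class $[R]$: since $\phi$ acts trivially on $\pi_1$, it fixes the splitting, and by the correspondence between sphere isotopy classes in $W_n$ and free splittings of $F_n$ due to Hatcher (which rests on Laudenbach's homotopy-equals-isotopy theorem for spheres in three-manifolds), this forces $\phi(R)$ to be isotopic to $R$. After isotopy, one may arrange that $\phi$ fixes $R$ setwise and, using that orientation-preserving self-diffeomorphisms of $S^2$ are isotopic to the identity, that $\phi$ restricts to the identity on $R$. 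Cutting along $R$ produces a copy of $W_{n-1}$ with two open balls removed. Applying Corollary~\ref{cor:strong-haken-s3} and Theorem~\ref{markedWaldhausen} together with the inductive hypothesis on $W_{n-1}$ shows that the restriction of $\phi$ is, up to a Dehn twist supported in a collar of $R$, realised by an element of the kernel for $W_{n-1}$. Iterating produces $\phi$ as a product of sphere twists; since each twist has order two and these twists commute, $K$ is finite.

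The main obstacle I anticipate is the first step above: showing that a $\pi_1$-trivial mapping class preserves the isotopy class of a given non-separating sphere. This requires the full strength of the correspondence between embedded spheres and free splittings of $F_n$, which is itself a substantial result and close cousin to the statement being proved. Once generation by sphere twists is established, triviality of the $K$-action on isotopy classes follows explicitly: a Dehn twist $\tau_R$ is supported in a collar $R \times [-1,1]$ and acts on a transverse loop via a loop in $\mathrm{SO}(3)$ applied pointwise to $S^2$, so since $\pi_1(S^2) = 0$ no loop homotopy class is changed, and combined with Laudenbach's homotopy-equals-isotopy theorem this gives triviality on loop isotopy classes. For embedded spheres, one uses that $\tau_R$ is isotopic to the identity on the complement of a single point of $R$, so any sphere $\Sigma$ intersecting $R$ can first be isotoped off that point, after which $\tau_R$ acts as the identity.
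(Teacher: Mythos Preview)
The paper does not prove this theorem at all: it is quoted as a result of Laudenbach, with references to \cite{Laudenbach73}, \cite{Laudenbach74}, and \cite{BrendleBroaddusPutman}. So there is no ``paper's own proof'' to compare against; the authors use Theorem~\ref{thm:laudenbach} as a black box (notably in the proof of Corollary~\ref{cor:strong-haken-standard}).

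As for your proposed argument, it is circular in at least two places. First, to show that a $\pi_1$--trivial mapping class fixes the isotopy class of a given non-separating sphere, you invoke the correspondence between isotopy classes of spheres in $W_n$ and free splittings of $F_n$. That correspondence (in the direction you need: distinct isotopy classes give distinct splittings) is essentially equivalent to Laudenbach's homotopy-implies-isotopy theorem for spheres, which is part of the same circle of results as the statement you are trying to prove; you even flag this yourself. Second, in the last paragraph you explicitly appeal to ``Laudenbach's homotopy-equals-isotopy theorem'' to conclude triviality on loop isotopy classes, which again assumes the very machinery under discussion. Separately, the appeal to Corollary~\ref{cor:strong-haken-s3} and Theorem~\ref{markedWaldhausen} in the inductive step is not doing the work you want: those are statements about Heegaard surfaces and Haken spheres in punctured $3$--spheres, not about diffeomorphisms of a twice-punctured $W_{n-1}$ that act trivially on $\pi_1$; something like Cerf--Palais type results or a direct analysis of diffeomorphisms of $S^2\times I$ is what is actually needed to pin down the collar twist. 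A genuine proof along your outline would have to establish the sphere-fixing step by independent means (this is the hard part of Laudenbach's work), and replace the Heegaard-splitting citations by the appropriate statements about diffeomorphism groups.
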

\begin{cor}\label{cor:strong-haken-standard}
	For the standard Heegaard splitting of $W_n$, every essential sphere in $W_n$ is isotopic to a Haken sphere.
\end{cor}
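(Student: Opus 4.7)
The plan is to use the Griffiths--Laudenbach machinery to reduce the problem to producing one Haken sphere per topological type of essential sphere, and then to exploit the doubled handlebody mapping class group to transport an arbitrary essential sphere onto such a standard Haken sphere.

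First, I would observe that the standard splitting $W_n = V_n \cup_{S_n} V_n$ automatically carries many Haken spheres: for every essential disk $D \subset V_n$, the sphere obtained by doubling $D$ across $\partial D$ meets $S_n$ precisely in $\partial D$ and is therefore a Haken sphere. Doubling a meridional disk yields a non-separating Haken sphere, while doubling a separating disk that cuts $V_n$ into $V_k$ and $V_{n-k}$ yields a separating Haken sphere decomposing $W_n$ as $W_k \# W_{n-k}$. Consequently, every topological type of essential sphere in $W_n$ is realised by a standard Haken sphere.

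Next, every element of $\mathrm{Mcg}(V_n)$ canonically doubles to a mapping class of $W_n$ preserving $S_n$, producing a homomorphism $d\colon \mathrm{Mcg}(V_n)\to \mathrm{Mcg}(W_n)$ into the Goeritz group of the standard splitting. Composing $d$ with the Laudenbach quotient $\mathrm{Mcg}(W_n)\to \mathrm{Out}(F_n)$ of Theorem~\ref{thm:laudenbach} recovers, via the natural identification $\pi_1(V_n)\cong\pi_1(W_n)$, the Griffiths surjection of Theorem~\ref{thm:griffiths}; hence $d(\mathrm{Mcg}(V_n))$ surjects onto $\mathrm{Out}(F_n)$. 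Since the Laudenbach kernel acts trivially on isotopy classes of embedded spheres, $d(\mathrm{Mcg}(V_n))$ realises the entire $\mathrm{Mcg}(W_n)$-action on sphere isotopy classes. Given an essential sphere $\sigma$, I choose a standard Haken sphere $\sigma_0$ of the same topological type; by the classical transitivity of $\mathrm{Out}(F_n)$ on primitive conjugacy classes and on free product decompositions of $F_n$, there is $\phi \in \mathrm{Mcg}(V_n)$ with $d(\phi)(\sigma)$ isotopic to $\sigma_0$. After isotoping $d(\phi)$ to literally preserve $S_n$, the preimage $d(\phi)^{-1}(\sigma_0)$ meets $S_n$ in a single simple closed curve and is therefore a Haken sphere isotopic to $\sigma$.

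The main obstacle is justifying the transitivity assertion for the $\mathrm{Out}(F_n)$-action on isotopy classes of essential spheres. Although this follows from Hatcher--Laudenbach-style work on sphere systems combined with the fact that $\mathrm{Out}(F_n)$ is transitive on primitive elements (and on free product decompositions) of $F_n$, one should handle non-separating and separating spheres uniformly, carefully matching the free-splitting invariant $F_n = F_k * F_{n-k}$ attached to a separating sphere to ensure the two spheres really do lie in the same orbit.
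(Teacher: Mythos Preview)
Your approach is essentially identical to the paper's: both double disks to produce Haken spheres of every topological type, use Griffiths' surjection to realise all of $\mathrm{Out}(F_n)$ by doubled handlebody mapping classes preserving the standard splitting, and then invoke Laudenbach's theorem (the kernel acts trivially on sphere isotopy classes) to conclude. The only difference is that the paper resolves your flagged ``main obstacle'' more cheaply: rather than appealing to $\mathrm{Out}(F_n)$-transitivity on algebraic data (and note that nonseparating spheres correspond to corank-$1$ free factors rather than primitive conjugacy classes), it simply observes that two essential spheres of the same type have homeomorphic complements---a twice-punctured $W_{n-1}$, or a once-punctured $W_k$ together with a once-punctured $W_{n-k}$---so some homeomorphism of $W_n$ carries one to the other.
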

\begin{proof}
	First observe that any two nonseparating spheres in $W_n$ can be mapped to each other by a homeomorphism. 
	Namely, the complement of such a sphere is homeomorphic to $W_{n-1}$ with two punctures.
	Similarly, separating spheres can be mapped to each other if and only if the fundamental groups of the complements are free groups of the same rank (as the complement is a disjoint union of once-punctured $W_k$ and $W_{n-k}$).
	
	Next, observe that there are Haken spheres of all such possible types, obtained by doubling a suitable disk
	in the handlebody. 
	
	Let $i:V_n \to W_n$ be the inclusion induced by doubling. Observe that on the one hand, the boundary of $V_n$ 
	maps under $i$ to the standard Heegaard splitting of $W_n$, and on the other hand $i$ induces an isomorphism $i_*$
	of fundamental groups. For any  outer automorphism $\varphi\in \mathrm{Out}(\pi_1(V_n))$ of the fundamental group of $V_n$, by Theorem~\ref{thm:griffiths}, there is a homeomorphism $f: V_n \to V_n$ inducing it. 
	Let $F:W_n \to W_n$ be the homeomorphism of $W_n$ obtained by doubling $f$. Observe that $F$ preserves the standard Heegaard splitting of $W_n$ by construction, and $F$ induces $\varphi$ via the isomorphism
	$i_*:\pi_1(V_n) \to \pi_1(W_n)$. Since $\varphi$ was arbitrary, this shows that
	any outer automorphism of $\pi_1(W_n)$ can in fact be realised by a homeomorphism
	of $W_n$ preserving the standard Heegaard splitting. 
	
	Together with Laudenbach's Theorem~\ref{thm:laudenbach} this shows that any sphere is isotopic to the
	image of a Haken sphere under a homeomorphism preserving the standard Heegaard splitting -- hence, it is
	isotopic to a Haken sphere.
\end{proof}

\begin{lem} \label{genusn}
There is a unique Heegaard splitting of $W_n$ of genus $n$.
\end{lem}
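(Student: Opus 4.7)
The plan is to induct on $n$, with the base case $n=1$ being Proposition~\ref{genus1}. For $n \geq 2$, let $\Sigma$ be a Heegaard splitting of $W_n$ of genus $n$. Since $W_n$ has fundamental group of rank $n$, every Heegaard splitting of $W_n$ has genus at least $n$, so $\Sigma$ is automatically unstabilized. By Haken's Lemma (Theorem~\ref{thm:hakens-lemma}) there is a Haken sphere $R$ for $(W_n,\Sigma)$, and the strategy is to cut $W_n$ along $R$ and recognize each resulting piece as a minimal-genus Heegaard splitting of a smaller $W_k$, to which the inductive hypothesis applies.

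I split into cases depending on whether $R$ is separating. If $R$ separates, it realizes a connect-sum decomposition $W_n = W_{k_1}\# W_{k_2}$ with $k_1,k_2\geq 1$ and $k_1+k_2=n$; cutting along $R$ yields once-punctured $W_{k_i}$'s with inherited Heegaard splittings of genera $g_1,g_2$. Since $R$ is separating in $W_n$ exactly when $R\cap\Sigma$ is separating on $\Sigma$, we get $g_1+g_2=n$. Filling each puncture (as in the proof of Theorem~\ref{markedWaldhausen}) gives a Heegaard splitting of $W_{k_i}$ of genus $g_i$, forcing $g_i\geq k_i$; combined with $g_1+g_2=k_1+k_2$ this yields $g_i=k_i$. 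By induction each filled splitting is standard, so $\Sigma$ identifies with the connect sum of two standard splittings, which by the uniqueness of connected-sum Heegaard splittings recorded in the paper is the standard splitting of $W_n$. If instead $R$ is nonseparating, cutting along $R$ yields a twice-punctured $W_{n-1}$ with an induced genus-$(n-1)$ splitting; filling the two boundary spheres gives a genus-$(n-1)$ Heegaard splitting of $W_{n-1}$, which is standard $\hat\Sigma$ by induction. To reconstruct $(W_n,\Sigma)$, one drills two balls $B_\pm$ out of $(W_{n-1},\hat\Sigma)$ (each meeting $\hat\Sigma$ in a disk) and glues the resulting boundary spheres by an orientation-reversing homeomorphism sending one scar circle to the other. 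Since any pair of small disjoint disks on the connected surface $\hat\Sigma$ can be moved to any other by an ambient isotopy preserving $\hat\Sigma$, and the orientation-reversing sphere gluing matching scar circles is unique up to isotopy, the drilling data can be aligned with the standard construction of the standard $W_n$ splitting from the standard $W_{n-1}$ splitting, so the reconstructed $\Sigma$ is standard.

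The main obstacle is the nonseparating case: making rigorous the claim that any drill-and-reglue operation on $(W_{n-1},\hat\Sigma_{\mathrm{std}})$ gives the standard $W_n$ splitting. The required transitivity on drilling data, while keeping the Heegaard structure fixed, ultimately comes from the richness of $\mathrm{Mcg}(W_{n-1})$ provided by Theorems~\ref{thm:griffiths} and~\ref{thm:laudenbach}: these let us realize any reshuffling of the two balls and any adjustment of the gluing map by a homeomorphism of $W_{n-1}$ preserving $\hat\Sigma_{\mathrm{std}}$, after which the reconstructed splitting visibly coincides with the standard one.
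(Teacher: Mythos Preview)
Your argument is correct but follows a genuinely different route from the paper's. The paper does not induct on $n$; instead it produces a full system $R_1,\ldots,R_{n-1}$ of Haken spheres for the unknown splitting $Y$ (via repeated use of Haken's Lemma) and then invokes Corollary~\ref{cor:strong-haken-standard} to see that the \emph{same} spheres are simultaneously Haken for the standard splitting $S$. Both $S$ and $Y$ are thus cut by the $R_i$ into genus--$1$ splittings of the same punctured $W_1$ summands, and Proposition~\ref{genus1} matches them up piecewise. So the paper's key input is the Strong Haken theorem for the standard splitting (resting on Theorems~\ref{thm:griffiths} and~\ref{thm:laudenbach}), whereas your induction with a single Haken sphere and the fill--drill--reglue manoeuvre (modelled on the proof of Theorem~\ref{markedWaldhausen}) sidesteps that corollary entirely and is in that sense more elementary, at the cost of a separating/nonseparating case split.

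One caution about your final paragraph: appealing to Theorems~\ref{thm:griffiths} and~\ref{thm:laudenbach} to ``realize any adjustment of the gluing map by a homeomorphism of $W_{n-1}$ preserving $\hat\Sigma_{\mathrm{std}}$'' is both unnecessary and, read literally, not enough---a homeomorphism of $W_{n-1}$ would only show the two resulting splittings of $W_n$ are \emph{homeomorphic} as pairs, not isotopic. The justification you actually need is the more elementary one you already sketched: the drilling sites are aligned by an ambient \emph{isotopy} sliding disks along the connected surface $\hat\Sigma_{\mathrm{std}}$, and the gluing map is pinned down because it must send the $V$--spot on $\partial_+$ to the $V$--spot on $\partial_-$ (so that $V$ reglues to a handlebody), making it unique up to isotopy among orientation-reversing sphere maps carrying scar circle to scar circle. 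With that clarification your proof stands without any appeal to $\mathrm{Mcg}(W_{n-1})$.
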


\proof 
Connected sum decompositions of $W_n$ are not unique.  However, let $W_n = V \cup_S V'$ be the standard Heegaard splitting and let $W_n = X \cup_Y X'$ be any Heegaard splitting of genus $n$.   By repeated application of Theorem \ref{thm:hakens-lemma} there are Haken spheres $R_1 \cup \dots \cup R_{n-1}$ for $W_n$ that cut $W_n = X \cup_Y X'$ into standard Heegaard splittings of $W_1$.  By Corollary \ref{cor:strong-haken-standard}, $R_1, \dots, R_{n-1}$ are also Haken spheres for $W_n = V \cup_S V'$.  By an Euler characteristic argument, these cut $W_n = V \cup_S V'$ into genus $1$ Heegaard splittings of the summands.  By Proposition \ref{genus1} these are standard.  In particular, $Y$ is isotopic to $S$.  
\qed

\begin{proof}[Proof of Proposition \ref{HSofWn}]
For an unstabilized Heegaard splitting, this is Lemma \ref{genusn}.  Furthermore, if $n = 1$, then this follows from Proposition \ref{genus1}. So suppose that $n > 1$ and $W_n = V \cup_S V'$ is stabilized.  By Corollary \ref{cor:strong-haken-standard}, there is a Haken sphere $R$ that decomposes $W_n$ into $W_1 \# W_{n-1}$.
Moreover, by \cite{QiuScharlemann}, one of the Heegaard splittings inherited by the summands is stabilized.  By induction, $W_n = V \cup_S V'$ is a stabilization of a connected sum of standard Heegaard splittings, {\it i.e.,} a stabilization of the standard Heegaard splitting of $W_n$.  
\end{proof}

We finish this section with a quick discussion on ``flippable'' Heegaard splittings. Namely, so far we have
considered Heegaard splittings up to isotopy of the splitting surface.
One could define a more refined ``oriented'' version of equivalence by requiring that the
handlebodies are isotopic (as opposed to just their boundary surface).

We want to remark that for the manifolds $W_n$ we have been considering, the two notions of equivalence
are in fact equal. Given the uniqueness shown above, to see this we just have to observe that the standard
splittings of $W_n$ are ``flippable'' -- i.e. there is an isotopy exchanging the ``inside'' and ``outside'' handlebodies.

For $W_1$ this can be seen by hand (by rotating the $S^2$ factor of $S^1\times S^2)$. Furthermore, by considering a separating Haken sphere, we see that the connected sum of flippable Heegaard splittings is flippable and, similarly, that stabilizations of flippable Heegaard splittings are flippable.  By Theorem \ref{HSofWn}, all Heegaard splittings of $W_n$ are stabilizations of the standard Heegaard splitting.  Hence all Heegaard splittings of $W_n$ are flippable. 

\begin{figure}
\includegraphics[scale=.6]{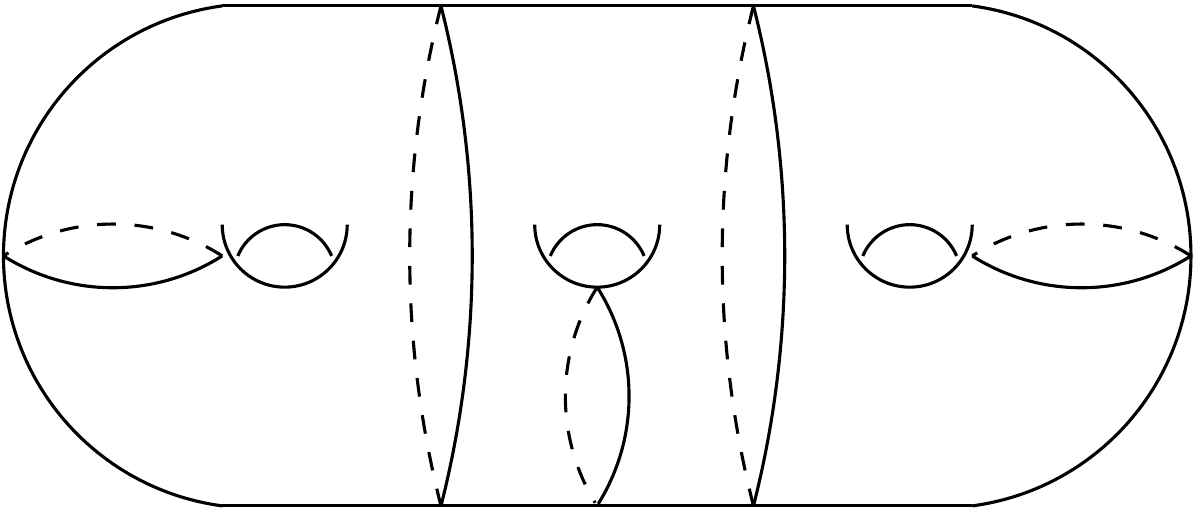}
\centering
\caption{\sl A doubled handlebody can be flipped}
\label{handlebody}
\end{figure}

\section{Strong Haken Theorem}

Combining the uniqueness of Heegaard splittings for $W_n$ (Proposition~\ref{HSofWn}) with Corollary~\ref{cor:strong-haken-standard} yields a \emph{Strong Haken Theorem} for $W_n$: any sphere
in $W_n$ is isotopic to a Haken sphere.  This statement was recently proved by Scharlemann \cite{Scharlemann2020} for all three-manifolds. In this section, we provide a short independent proof of this theorem for closed manifolds and manifold with spherical boundary.

The following proof proceeds by two nested inductions.  It naturally involves $3$-manifolds with spherical boundary, even if we just want to prove the theorem in the closed case. Recall that for such $3$-manifolds, we decree that each boundary sphere (puncture) meets the splitting surface in a single simple closed curve.

\begin{proof}[{Proof of Theorem~\ref{GStrongHaken}}]
	We prove the theorem by considering all punctured $3$-manifolds and all Heegaard splittings, ordered
	according to a suitable complexity. Namely, if $M = V \cup_S V'$ is a Heegaard splitting, we define
	the \emph{complexity} as the pair $(g(S), n(S))$ of genus and number of spots of the handlebodies
	(ordered lexicographically). We perform a nested induction on the genus $g$ and the number of boundary components $n$. The argument for the inductive step is in fact the same in both cases, and so we describe both inductions 
	simultaneously.
	
	\paragraph{Induction Start $g=0, n\geq 0$} Observe that the only punctured $3$--manifold that can be 
	obtained from a Heegaard splitting of genus $0$ is the three-sphere. Thus, the Strong Haken Theorem in this case is simply Corollary~\ref{cor:strong-haken-s3}.
	
	\paragraph{Induction Steps} Now suppose that the Strong Haken Theorem is known for all manifolds of complexity at most $(g,n)$, and suppose that $M$ is a manifold of complexity $(g, n+1)$, or suppose that the Strong Haken Theorem is known for all manifolds of complexity $(g, k), k \geq 0$, and $M$ is a manifold of complexity $(g+1, 0)$.
	
	First observe that, by Lemma~\ref{lem:preperipheral-strong-haken} any almost peripheral sphere in $M$ is isotopic
	to a Haken sphere. We thus have to show that surviving spheres in $M$ are also isotopic to Haken spheres.
	We now make the following
	\begin{claim}
		Suppose that $R$ is a surviving Haken sphere in $M$, and suppose that $R'$ is a surviving sphere disjoint from $R$. Then $R'$ is a Haken sphere.
	\end{claim}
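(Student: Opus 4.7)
The plan is to cut $M$ along $R$ to reduce complexity, invoke the induction hypothesis on the resulting pieces, and then transport the isotopy back to $M$. Let $M^\ast = M \setminus \mathrm{nbhd}(R)$, with the Heegaard splitting whose surface $S^\ast$ is obtained from $S$ by cutting along the single curve $R \cap S$. The new boundary components of $M^\ast$ (two copies of $R$, lying in a single component of $M^\ast$ or in two distinct components according to whether $R$ is non-separating or separating) each meet $S^\ast$ in a single spot, so $M^\ast$ is genuinely a punctured manifold with Heegaard splitting in the sense of the paper.

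The first key step is to verify that each component of $M^\ast$ has strictly smaller complexity than $(M,S)$. If $R$ is non-separating, the splitting genus drops by one. If $R$ is separating, the two resulting surfaces have genera $g_1,g_2$ summing to $g(S)$. A component with $g_i=0$ would, by Theorem~\ref{markedWaldhausen}, be a punctured $3$-sphere, so that $R$ would bound a punctured $S^3$ in $M$, contradicting the hypothesis that $R$ is surviving. Hence $0<g_i<g(S)$ for both components, and the inductive hypothesis is available on each piece.

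Since $R'$ is disjoint from $R$, it descends to an embedded sphere in $M^\ast$. It is incompressible there, since any ball it bounded in $M^\ast$ would be a ball in $M$, contradicting that $R'$ is surviving. I then distinguish two cases. If $R'$ is peripheral in $M^\ast$, it is isotopic in $M^\ast$ to a boundary component; this boundary cannot be an original puncture of $M$ (else $R'$ would be peripheral, and hence almost peripheral, in $M$), so it is a copy of $R$, and the isotopy in $M^\ast$ glues to an isotopy in $M$ from $R'$ to $R$, which is already Haken. If $R'$ is non-peripheral in $M^\ast$, it represents a vertex of the sphere complex of the component of $M^\ast$ containing it, and the inductive hypothesis (combined with Lemma~\ref{lem:preperipheral-strong-haken} in the event that $R'$ happens to be almost peripheral in $M^\ast$) yields an ambient isotopy of $M^\ast$ carrying $R'$ to a Haken sphere $R''$ of $S^\ast$. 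Arranging this isotopy to fix a neighborhood of $\partial M^\ast$ allows it to extend across the gluing to an ambient isotopy of $M$. The image $R''$ is disjoint from $R$ and meets $S^\ast \subset S$ in a single curve, so it is a Haken sphere for $(M,S)$.

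I expect the main subtle point to be the complexity bookkeeping in the separating case: both pieces must have positive genus, and this is precisely where it matters that $R$ is surviving rather than merely Haken. The rest of the argument is largely a matter of carefully tracking how properties of spheres (compressibility, peripherality, being surviving versus almost peripheral) transfer between $M$ and $M^\ast$, and of ensuring that the final isotopy produced in the pieces extends across the gluing.
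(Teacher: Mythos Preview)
Your proof is correct and follows the same strategy as the paper: cut $M$ along the surviving Haken sphere $R$, verify that the inherited splitting on the piece containing $R'$ has strictly smaller complexity, and invoke the inductive hypothesis (or Lemma~\ref{lem:preperipheral-strong-haken}) there. Your complexity bookkeeping in the separating case is in fact slightly sharper than the paper's: by observing that a genus-$0$ complementary piece would be a punctured $S^3$ and hence force $R$ to be almost peripheral, you conclude that the genus strictly drops on \emph{both} sides, whereas the paper only rules out the complementary surface $S-S'$ being a disk or an annulus and then falls back on the lexicographic order in $(g,n)$ to handle the remaining planar cases. One minor quibble: the fact that a manifold admitting a genus-$0$ spotted Heegaard splitting is a punctured $S^3$ is elementary and does not really require Theorem~\ref{markedWaldhausen}, which is a uniqueness statement for such splittings rather than a recognition statement. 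Your explicit treatment of the case where $R'$ becomes peripheral in $M^\ast$ (hence isotopic to $R$) is also a bit more careful than the paper's passing parenthetical remark.
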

	\begin{proof}[Proof of Claim]\renewcommand{\qedsymbol}{//}
		Denote by $M-R$ the punctured $3$--manifold obtained by cutting at $R$. $M-R$ has two punctures more than $M$, corresponding to the two sides of $R$. $M-R$ has one or two components, depending on whether $R$ is separating or not.
		
		Let $M'$ be the component of $M-R$ containing $R'$. This manifold inherits a Heegaard splitting from $V \cup_S V'$ with splitting surface a component of $S' = S - (R\cap S)$.
		If $R\cap S$ is nonseparating, then $g(S') < g(S)$. If $R \cap S$ is separating, then either the genus
		or the number of boundary components is smaller for $S'$. In either case, $(g(S'),n(S'))<(g(S),n(S))$ lexicographically (crucial here is the fact that $R'$ is an essential sphere in $M'$: it cannot become compressible, as that would violate incompressibility in $M$, therefore $S - S'$ is not a disk; furthermore, it cannot become peripheral as it would then be peripheral in $M$, or isotopic to $R$, so $S - S'$ is not an annulus.) 

		Now, if $R'$ is almost peripheral in $M'$, then by Theorem~\ref{thm:nonpreperipheral-hakens-lemma} it is isotopic to a Haken sphere in $M'$. Otherwise, since the complexity of the splitting of $M'$ is smaller than the original one, we can use the inductive hypothesis on $M'$ to conclude that $R'$ is isotopic to a Haken sphere in $M'$. Interpreting $M'$ as a submanifold of $M$, and using that the Heegaard splitting of $M'$ is inherited from $M$, this shows that $R'$ is isotopic to a Haken sphere in $M$ as well.	\end{proof}
	
	Now, if $M$ contains any surviving spheres, then the Surviving Haken Lemma (Theorem~\ref{thm:nonpreperipheral-hakens-lemma}) implies that there is a surviving Haken sphere $\sigma_0$. Connectivity of the surviving sphere complex (Lemma~\ref{lem:connectivity-sphere-graph}), together with the Claim then inductively implies that any surviving sphere is isotopic to a Haken sphere. This finishes the proof of the inductive steps.
\end{proof}

\end{document}